\numberwithin{equation}{section}
\newtheorem{theorem}{Theorem}[section]
\newtheorem{lemma}{Lemma}[section]
\theoremstyle{remark}
\newtheorem*{remark}{Remark}
\def\card{\operatorname{Card}}
\def\diam{\operatorname{diam}}
\def\dim{\operatorname{dim}}
\def\area{\operatorname{area}}
\def\dist{\operatorname{dist}}
\def\max{\operatorname{max}}
\def\min{\operatorname{min}}
\def\dens{\operatorname{dens}}
\begin{document}
\title[Hausdorff measure of escaping sets]{Hausdorff measure of escaping sets on certain meromorphic functions}
\author{}
\address{}
\email{}
\author{Wenli Li}
\address{Mathematisches Seminar, Christian-Albrechts-Universit\"at zu Kiel,
24098 Kiel, Germany}
\email{w.li@math.uni-kiel.de}
\subjclass[2010]{37F10, 30D05}
\keywords{Iteration, Fatou set, Julia set, Escaping sets, Hausdorff dimension, Hausdorff measure}
\date{}
\begin{abstract}
We consider transcendental meromorphic function for which the set of finite singularities of its inverse is bounded. Bergweiler and Kotus gave bounds for the Hausdorff dimension of the escaping sets if the function has no logarithmic singularities over $\infty,$ the multiplicities of poles are bounded and the order is finite. We study the case of infinite order and find gauge functions for which the Hausdorff measure of escaping sets is zero or $\infty$.
\end{abstract}
\maketitle

\section{Introduction and main results}\label{introduction}

Suppose that $f$ is a meromorphic function on the whole complex plane. Denote by $f^{n}=f(f^{n-1})$ the $n$-th iterate of $f,$ for a natural number $n.$ The Fatou set $F(f)$ is defined as the set of all points with a neighborhood where the iterates $f^{n}$ of $f$ are defined and form a normal family. The Julia set $J(f)$ is the complement of $F(f),$ that is $J(f)=\hat{\mathbb{C}}\backslash F(f),$ where $\hat{\mathbb{C}}=\mathbb{C}\cup \left\{\infty\right\}$ and the escaping set of $f$ is
$$I(f)=\left\{z\in \mathbb{C}: f^{n}(z)\rightarrow \infty,~~~as~ n\rightarrow \infty\right\}.$$
It was shown that $I(f)\neq \emptyset$ and $J(f)=\partial I(f)$ by Eremenko \cite{Eremenko1989} for entire $f$ and by Dom\'inguez \cite{Dominguez1998} for meromorphic $f.$ We say that a meromorphic function $f$ is in the Eremenko-Lyubich class $\mathcal{B}$ if the set of finite singularities of its inverse function $f^{-1}$ is bounded. The result $I(f)\subset J(f)$ was proved for entire $f\in \mathcal{B}$ by Eremenko-Lyubich \cite{EremenkoLyubich1992} and by Rippon-Stallard \cite{RipponStallard1999} for meromorphic $f\in \mathcal{B}$. The Hausdorff dimension of Julia sets and related sets are studied in many papers, see e.g. \cite{KotusUrbanski2008, Stallard2008} for surveys. As a comprehensive introduction to iteration theory of meromorphic functions we refer the readers to \cite{Bergweiler1993}.

The order $\rho(f)$ of a meromorphic function is defined by
$$
\rho(f)=\limsup_{r\rightarrow \infty}\frac{\log T(r,f)}{\log r},
$$
where $T(r,f)$ denotes the Nevanlinna characteristic of $f,$ see \cite{Hayman1964,GoldenbergOstrovskii2008,Yang1995}.
Denote the Hausdorff dimension of a set $A\subset \mathbb{C}$ by $\dim (A)$ and  the two-dimensional Lebesgue measure of $A$ by $\area(A).$ For a subset $A\subset \mathbb{C}$ and a gauge function $h,$ we denote by $\mu_{h}(A)$ the Hausdorff measure of $A$ with respect to $h.$ The specific definition is given by \eqref{hmeasure} in the next section, where we also give more information about the gauge function.

Bara\'nski \cite{Baranski2008} and Schubert \cite{Schubert2007} proved that if an entire function $f\in \mathcal{B}$  and $\rho(f)<\infty,$ then $\dim(J(f))=2.$ Actually they proved that $\dim(I_{R}(f))=2$ for all $R>0,$ where
$$I_{R}(f)=\left\{z\in\mathbb{C}\colon \liminf_{n\rightarrow \infty }|f^{n}(z)|\geq R \right\},$$
and $I_{R}(f)\subset J(f)$ for large $R.$
It was pointed out by Bergweiler and Kotus in \cite{BergweilerKotus2012} that for meromorphic functions in $\mathcal{B}$ which have finite order and for which $\infty$ is an asymptotic value the same conclusion holds. Assume that $\infty$ is not an asymptotic value and that there exists $M\in \mathbb{N}$ such that the multiplicity of all poles, except finitely many, is at most $M.$ In the same paper they proved that for such a function, the Hausdorff dimension of its escaping set is no more than $2M\rho/(2+M\rho),$ where $\rho$ is the order of $f.$ 

If $f$ is as above but of infinite order, then the area of $I(f)$ is zero, yet there is an example \cite[section 6]{BergweilerKotus2012} with $\dim (I(f))=2$. McMullen \cite{McMullen1987} proved that the Julia set of $\lambda e^{z}$ has Hausdorff dimension two but in the presence of an attracting periodic cycle its area is zero.  He further remarked that  $\mu_{h}(J(\lambda e^{z}))=\infty$ for $h(r)=r^{2}\log^{n}(1/r),$ for arbitrary $n\in \mathbb{N}.$ Peter \cite{Peter2008} gave a fairly precise description of the gauge functions $h$ for which $\mu_{h}(J(\lambda e^{z}))= \infty.$

Analogously we aim in this paper to find a gauge function for which the Hausdorff measure of $I(f)$ is $0$ or $\infty$ for meromorphic functions in $\mathcal{B}$ of infinite order.

We shall use the $n$-th order
$$\rho_{n}(f)=\limsup_{r\rightarrow\infty}^{}\frac{\log_{+}^{n+1}T(r,f)}{\log r},~~~~~~~n\in \mathbb{N},$$
as a further discription of the growth rate (cf. \cite[Chapter 3]{JankVolkmann1985}). If we let $n=0,$ then $\rho_{0}(f)$ is what we defined previously as $\rho(f).$ And clearly we have $\rho(f)=\infty$ if $\rho_{n}(f)>0$ for $n\geq 1.$  
 
Not surprisingly, the representation of the gauge fuction corresponds to the growth rate of $f.$ Actually we choose 
\begin{equation}\label{h}
h(t)=t^{2}\left(\log_{}^{n} \frac{1}{t}\right)^{\gamma},
\end{equation} 
where $t\in (0, \delta_{n}],$ $\delta_{n}=1/\exp^{n}(\gamma)$ for $n\in\mathbb{N}$ and  $\gamma>0$ will depend on $\rho_{n}(f).$ 
More specifically, we obtain the following result.

\begin{theorem}\label{thm0}
	Let $f\in\mathcal{B}$ be a meromorphic function with $\rho_{}=\rho_{n}(f)$ satisfying $0<\rho<\infty.$ Suppose that $\infty$ is not an asymptotic value of $f$ and that there exists $M\in\mathbb{N}$ such that the multiplicity of all poles of $f,$ except possibly finitely many, is at most $M.$ If $h$ is given by \eqref{h} and  $\gamma< 2/(M\rho)$, then $\mu_{h}(I(f))= 0.$
\end{theorem}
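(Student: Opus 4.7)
The plan is to adapt the covering argument of Bergweiler and Kotus~\cite{BergweilerKotus2012}---who bounded $\dim I(f)$ under the same hypotheses but for $f$ of finite order---to yield a bound on the finer Hausdorff measure $\mu_h$ when $f$ is of infinite order controlled by $\rho_n$. Since $f\in\mathcal{B}$ and $\infty$ is not an asymptotic value, we fix $R_0$ large enough that each component of $f^{-1}(\{|w|>R\})$, for $R\ge R_0$, is a bounded Jordan domain around a single pole of $f$ of multiplicity at most $M$ (after discarding the finitely many exceptional poles). Pick an increasing sequence $(R_k)_{k\ge0}$, to be specified, and set $I_N:=\{z:|f^k(z)|>R_k\text{ for all }k\ge N\}$; by countable subadditivity of $\mu_h$, and after a further intersection with a fixed bounded disk $D$, the statement reduces to showing $\mu_h(I_N\cap D)=0$ for each $N$ and $D$. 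The standard covering is by the components $\mathcal{V}_n$ of $W_n:=\bigcap_{k=N}^n(f^k)^{-1}(\{|w|>R_k\})$, each $V\in\mathcal{V}_n$ being indexed by an itinerary $(p_N,\dots,p_{n-1})$ of poles of $f$ with $|p_k|>R_k$.

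Two geometric inputs drive the argument. First, the normal form $f(z)\sim c_p(z-p)^{-m_p}$ near a pole, with $m_p\le M$, gives the local derivative bound $|f'|\gtrsim|f|^{1+1/M}$; iterating along any itinerary and applying the Koebe distortion theorem to the univalent inverse branch of $f^{n-1}|_V$ yields a uniform diameter bound $\diam V\lesssim R_n^{-1/M}$ (with strictly smaller diameters when the $|p_k|$'s are large), together with the comparison $(\diam V)^2\asymp\area(V)$. Second, the components $V\in\mathcal{V}_n$ being pairwise disjoint and $\area(I(f))=0$ in the infinite-order case (fact cited in the introduction),
\begin{equation*}
\sum_{\substack{V\in\mathcal{V}_n\\V\cap D\neq\emptyset}}(\diam V)^2\;\asymp\;\area(W_n\cap D)\;\longrightarrow\;0\qquad(n\to\infty).
\end{equation*}
To pass from $\sum(\diam V)^2\to0$ to $\sum h(\diam V)=\sum(\diam V)^2(\log^n(1/\diam V))^\gamma\to0$, we decompose $\mathcal{V}_n$ dyadically according to the value of $\diam V$: for the ``bulk'' components with $\diam V\asymp R_n^{-1/M}$ the gauge factor is bounded by $(\log^n R_n)^\gamma$, while for the components of much smaller diameter (from itineraries with large $|p_k|$) the contributing area is itself small and is controlled by a Nevanlinna-type summation over the $|p_k|$'s. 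Combined with the $n$-th order estimate $\log^{n+1}T(r,f)\le(\rho+\varepsilon)\log r$ valid for large $r$ and a recursive choice of $(R_k)$ along which the bound is saturated, the hypothesis $\gamma<2/(M\rho)$ is precisely what guarantees the full sum tends to zero.

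The main obstacle is reconciling the rapid (infinite-order) growth of the Nevanlinna characteristic $T(r,f)$ with the polylogarithmic gauge factor $(\log^n R_n)^\gamma$: since no global comparison of $T(r,f)$ with a polynomial in $r$ is available, the level-wise telescoping present in the finite-order argument of~\cite{BergweilerKotus2012} must be replaced by a careful dyadic accounting at each depth $n$ that separates the bulk area contribution from the small-diameter tail. The sharp constant $2/(M\rho)$ emerges from this tail analysis, in close analogy with Peter's~\cite{Peter2008} treatment of $\lambda e^z$.
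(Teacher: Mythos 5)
Your proposal identifies the right skeleton — cover $I(f)$ by itinerary cylinders $V$ of components of $f^{-l}(B(R))$ and estimate $\sum_V h(\diam_\chi V)$ — but the passage from $\sum_V(\diam V)^2$ to $\sum_V h(\diam V)$ is exactly where the paper's real work lies, and your sketch does not close it. Two concrete problems. First, the qualitative fact $\area(I(f))=0$ gives no rate: from it alone you cannot conclude that $\area(W_n\cap D)$ decays fast enough to absorb the polylogarithmic weight $\bigl(\log^n R_n\bigr)^\gamma$ you attach to the ``bulk'' components. A quantitative geometric decay $\sum_V(\diam V)^2\lesssim q^n$ with $q<1$ would be needed, and you never derive one. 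Second, the ``dyadic accounting that separates the bulk area contribution from the small-diameter tail'' and the ``Nevanlinna-type summation over the $|p_k|$'s'' are stated as headings rather than arguments; they contain the entire difficulty (this is where $\gamma<2/(M\rho)$ must enter) and nothing in the sketch shows how the threshold is produced.

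The paper proceeds in a structurally different and sharper way, avoiding any appeal to $\area(I(f))=0$. The two ideas you are missing are: (i) the gauge $h(t)=t^2(\log^n(1/t))^\gamma$ is \emph{submultiplicative}, $h(t_1\cdots t_l)\le\prod_j h(t_j)$ for small $t_j$ (Lemma~\ref{productive}, built on the iterated-log inequality Lemma~\ref{logpreaddtive}); applied to the telescoping diameter bound~\eqref{diamchi} this factorizes $\sum_{V\in E_l}h(\diam_\chi V)\le M^{-1}\bigl(M\sum_j h\bigl(c\,|b_j|/|a_j|^{1+1/M}\bigr)\bigr)^l$, turning the problem into a single-index series; and (ii) the convergence of that single-index series (Lemma~\ref{key}) is proved by observing that $G(t)=h(\sqrt{t})$ is concave near $0$ (Lemma~\ref{concave}) and applying Jensen's inequality blockwise over the dyadic shells $P_l$, together with the $n$-th-order bound on $n(r)$ (Lemma~\ref{nrorder}); the hypothesis $\gamma<2/(M\rho)$ enters precisely to make the resulting geometric series converge. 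Once the single-index sum is $<1/M$ for $R$ large, $\sum_{V\in E_l}h(\diam_\chi V)\to0$ and $\mu_h(I_{3R}(f))=0$ follows. Note also that a fixed $R$ suffices throughout; your increasing sequence $(R_k)$ is the device used for the \emph{lower}-bound Theorem~\ref{thminfty} (via McMullen/Peter density estimates), not for this upper bound, and importing it here only complicates the covering.
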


The bound $2/(M\rho_{})$ is probably not sharp. However the following result shows that it cannot be replaced by any value greater than $8/(M\rho_{}).$

\begin{theorem}\label{thminfty}
Let $0<\rho_{}<\infty$ and $n,~M\in \mathbb{N}.$ Then there exists a meromorphic function $f\in\mathcal{B}$ of $n$-th order $\rho_{n}(f)=\rho_{}$ for which all poles have multiplicity $M$ and $\infty$ is not an asymptotic value such that if $h$ is as in \eqref{h} and $\gamma >8/(M\rho)$, then $\mu_{h}(I(f))=\infty.$
\end{theorem}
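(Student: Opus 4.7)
The plan is to exhibit an explicit meromorphic function and then bound $\mu_{h}(I(f))$ from below by a mass distribution argument modelled on McMullen's construction for $\lambda e^{z}$ and Peter's refinement of it. For the construction I take
\[
f(z)=\frac{1}{\cos^{M}\!\bigl(E_{n}(\varphi(z))\bigr)},
\]
where $E_{n}$ denotes the $n$-fold iterated exponential with $E_{0}(w)=w$, and $\varphi$ is an entire function of order $\rho$ (e.g.\ $\varphi(z)=z^{\rho}$ if $\rho\in\mathbb{N}$, or a Mittag-Leffler function of order $\rho$ in general). Since $M(r,E_{n}\circ\varphi)=E_{n}(M(r,\varphi))$ is of order $\rho$ in the $n$-th iterated sense, a routine Nevanlinna bookkeeping gives $\rho_{n}(f)=\rho$. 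The finite singular values of $f^{-1}$ consist of the images of the critical values $\pm 1$ of $\cos$ under $w\mapsto 1/w^{M}$ together with the asymptotic value $0$; hence $f\in\mathcal{B}$. The poles of $f$ are the simple zeros of $\cos\circ E_{n}\circ\varphi$, so each has multiplicity exactly $M$, and $\infty$ is not an asymptotic value because along any path $z(t)\to\infty$ the argument $E_{n}(\varphi(z(t)))$ tends to infinity while the zeros of $\cos$ are isolated.

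Next I would construct a Cantor-like subset $E\subset I(f)$ by iterated pullbacks. Fix $R>0$ large; around each pole $p$ the map $f$ is univalent from a quasidisk $U(p,R)$ onto $\{|w|>R\}$. Setting $\mathcal{U}_{1}=\bigcup_{p}U(p,R)$ and inductively $\mathcal{U}_{k}=f^{-1}(\mathcal{U}_{k-1})\cap\mathcal{U}_{1}$, the nested intersection $E=\bigcap_{k}\overline{\mathcal{U}_{k}}$ lies in $I_{R}(f)\subset I(f)$. I then define a Borel probability measure $\nu$ on $E$ as the weak-$\ast$ limit of the normalized area measures $\area|_{\mathcal{U}_{k}}/\area(\mathcal{U}_{k})$ and invoke the mass distribution principle
\[
\mu_{h}(E)\;\geq\;\nu(E)\cdot\Bigl(\sup_{x\in E,\,r>0}\frac{\nu(B(x,r))}{h(r)}\Bigr)^{\!-1},
\]
which reduces the theorem to a uniform density estimate of the form $\nu(B(x,r))\leq C\,h(r)$.

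To prove the density bound, for each small $r$ one chooses the level $k$ at which the components of $\mathcal{U}_{k}$ meeting $B(x,r)$ have diameter comparable to $r$. Local expansion at the poles together with Wiman--Valiron estimates for $E_{n}$ translate this into a relation of the form $r\asymp d_{k-1}^{1/M}\bigl(\log^{n}(1/d_{k-1})\bigr)^{-\sigma}$, with an explicit exponent $\sigma$ determined by $\rho$. Iterating through the tower and counting components yields $\nu(B(x,r))\leq Cr^{2}(\log^{n}(1/r))^{\gamma}$ for every $\gamma>8/(M\rho)$. The main obstacle is exactly this calculation: one must propagate Koebe-type bounded-distortion estimates for $f^{k}$ through the $n$-fold exponential $E_{n}$, tracking carefully, scale by scale, how only the $n$-th iterated logarithm enters. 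The slack between the upper bound $2/(M\rho)$ of Theorem~\ref{thm0} and the lower bound $8/(M\rho)$ here is the compounded loss of a factor of four, arising from passing between area and squared diameter, from the distortion control on each petal, and from comparing consecutive generations $d_{k}$ and $d_{k-1}$; each of these losses is visibly non-sharp, consistent with the author's own remark that the upper bound is probably not sharp either.
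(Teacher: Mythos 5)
Your overall strategy is in the same spirit as the paper's---build a Cantor-like nested set $E\subset I_{R}(f)\subset I(f)$ and estimate its Hausdorff measure via a density argument---but both the construction and the measure-theoretic step diverge from what is needed, and the quantitative heart of the argument is absent. The paper does not work with a composition $1/\cos^{M}(E_{n}\circ\varphi)$; it builds an explicit Mittag-Leffler-type series $g(z)=2\sum_{k}p(k)^{n_{k}}z^{n_{k}}/(z^{2n_{k}}-p(k)^{2n_{k}})$ with $p(t)=(\log^{n}t)^{1/\rho}$ and $n_{k}=\lfloor p(k)/p'(k)\rfloor$, then sets $f=g^{M}$, precisely so that the poles lie on the circles $|z|=p(k)$ with controlled multiplicities $n_{k}$, the residues $|b_{j}|\sim p'(k)$ are computable, and $g$ is provably bounded on a ``spider's web'' (a chain of circles and radial segments) that separates the poles. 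Those explicit estimates are exactly what drive the diameter bound $d_{l}$ and the density bound $\Delta_{l}$ in the paper. By contrast, for your function you still need to prove $\infty$ is not asymptotic (the isolated-zeros remark is not a proof; one must exhibit a connected unbounded set separating the poles on which $f$ is bounded) and near a pole of multiplicity $M>1$ the map is $M$-to-one, not univalent, so your ``univalent onto $\{|w|>R\}$'' is already wrong and forces the multi-branch bookkeeping the paper carries out.

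The more serious issue is the mass-distribution step. The principle you quote, $\mu_{h}(E)\geq\nu(E)\cdot(\sup\nu(B(x,r))/h(r))^{-1}$, yields only $\mu_{h}(E)>0$ for a fixed comparison constant; to conclude $\mu_{h}(E)=\infty$ you must show the density ratio tends to $0$ uniformly as $r\to0$, which you never assert. The paper uses a different and sharper tool (its Lemma \ref{mass}, taken from Peter/McMullen): if each level-$k$ piece has spherical diameter at most $d_{k}$ and the next generation fills it with density at least $\Delta_{k}$, then $\mu_{h}(N)=\infty$ provided $g(d_{k})\prod_{j\leq k}\Delta_{j}\to\infty$, where $h(t)=t^{2}g(t)$. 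The entire content of the theorem then reduces to computing $d_{l}$ (via the Koebe-chain estimate \eqref{diamchi} and the asymptotics $|b_{j}|/|a_{j}|^{1+1/M}\sim 1/(q'(r)r^{1+1/M})$) and $\Delta_{l}\geq B/R_{l}^{2/M}$ with $R_{l}=R_{0}\exp(2^{l})$, and checking that $g(d_{l})\sim\exp(\rho\gamma 2^{l-1})$ beats $\prod\Delta_{j}\asymp\exp(-(8/M)2^{l-1})$ precisely when $\gamma>8/(M\rho)$. You explicitly defer this ``main obstacle,'' but it is not an obstacle to the write-up: it \emph{is} the proof, and nothing in your outline produces the exponent $8/(M\rho)$ or even identifies where the factor $8$ comes from (it comes from $R_{l}^{2/M}$ with $R_{l}=R_{0}\exp(2^{l})$ summed as a geometric series, not from a ``compounded loss of a factor of four'').
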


The paper is arranged as follows. In section \ref{gauge} we give some definitions and discuss some essential properties related to the gauge function. Afterwards we recall several lemmas which play an important role in our proof. Section \ref{proof1} is to give the  proof of Theorem \ref{thm0}. An example is constructed in section \ref{example} to prepare for the proof of Theorem \ref{thminfty} in section \ref{proof2}.

\section{Hausdorff Measure and gauge function}\label{gauge}
For $\alpha>0$ we say that $h\colon (0, \alpha]\rightarrow (0, +\infty)$ is a gauge function if it is continuous, increasing and satisfies $\lim_{t\rightarrow 0}h(t)=0.$ An example is the function that we defined in \eqref{h}.
For a set $A\subset\mathbb{C},$ we call a sequence $(A_{j})_{j\in\mathbb{N}}$ of sets  $A_{j}\subset\mathbb{C}$ a $\delta$-cover of $A$ if  $$A\subset\bigcup_{j=1}^{\infty}A_{j},$$and
$$\diam(A_{j})\leq \delta$$ for all $j\in\mathbb{N},$ 
where $\diam (A)= \sup_{x,y\in A}|x-y|$ denotes the diameter. The diameter with respect to  the spherical metric will be denoted by $\diam_{\chi}(A).$

Let $h$ be a gauge function. The measure $\mu_{h}$ defined by  
\begin{equation}\label{hmeasure}
\mu_{h}(A)=\lim_{\delta\rightarrow 0}\inf_{}\left\{\sum_{j=1}^{\infty}h(\diam A_{j})\colon (A_{j})_{j\in \mathbb{N}} ~\text{a}~\delta \text{-cover}~\text{of} ~A\right\},
\end{equation}
is called the Hausdorff measure corresponding to the function $h.$ For more details about Hausdorff measure we refer to Rogers \cite{Rogers1970} and Falconer \cite[chaper 2]{Falconer1997}. 

We are going to show some properties of interest for the gauge function that we choose, which also take part in our following proofs. First we prove the following results.
\begin{lemma}
Let $c>1$ and $h(t)$ be defined as in \eqref{h}, then
\begin{equation}\label{2v}
h(ct)\leq c^{2}h(t).
\end{equation}
\end{lemma}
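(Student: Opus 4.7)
The plan is to cancel the common prefactor and reduce the inequality to a pure monotonicity statement about iterated logarithms. Writing out both sides, $h(ct)=c^{2}t^{2}\bigl(\log^{n}(1/(ct))\bigr)^{\gamma}$ and $c^{2}h(t)=c^{2}t^{2}\bigl(\log^{n}(1/t)\bigr)^{\gamma}$, so after dividing by the common factor $c^{2}t^{2}$ and using $\gamma>0$, the inequality \eqref{2v} is equivalent to
\[
\log^{n}\frac{1}{ct}\leq \log^{n}\frac{1}{t}.
\]

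I would prove this last inequality by induction on $n$. For $n=1$ it is immediate from $\log(1/(ct))=\log(1/t)-\log c$ together with $\log c>0$. For the inductive step, the monotone-increasing function $\log$ is applied to both sides of the inequality at level $n-1$; this preserves the inequality provided both iterates are strictly positive and hence lie in the domain of $\log$.

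The only real bookkeeping is to verify that every iterated logarithm involved is well-defined and positive, and this is exactly what the cutoff $\delta_{n}=1/\exp^{n}(\gamma)$ is tailored for: if $t\in(0,\delta_{n}]$ then $\log^{n}(1/t)\geq \gamma>0$, and consequently $\log^{k}(1/t)>0$ for every $k\leq n$, so each step of the induction is legitimate. The analogous bound holds for $\log^{k}(1/(ct))$ under the natural implicit assumption that $ct$ still lies in the admissible range $(0,\delta_{n}]$ (which is the only regime where the statement needs to be interpreted). I do not foresee any genuine obstacle here — the lemma is a routine doubling-type property of the gauge function, and its role will presumably be to allow one to thicken or translate covers by a bounded factor without degrading the Hausdorff sum by more than the area scaling $c^{2}$.
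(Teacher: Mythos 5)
Your proof is correct and is essentially the same as the paper's: both write out the definition, cancel the common factor $c^{2}t^{2}$, and reduce to the monotonicity of $\log^{n}$ in $t$. The paper states this monotonicity inequality in a single line without the inductive bookkeeping you supply, but the underlying argument is identical.
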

\begin{proof}With the definition we have
\begin{align*}
h(ct)&=(ct)^{2}\left(\log^{n}\frac{1}{ct}\right)^{\gamma}\leq c^{2}t^{2}\left(\log^{n}\frac{1}{t}\right)^{\gamma}=c^{2}h(t).
\end{align*}
\end{proof}
\begin{lemma}\label{logpreaddtive}
Let $n\in \mathbb{N},$ $l\geq 1$ be a positive integer and $t_{1}, t_{2}, \cdots t_{l}$ be real numbers. If $0<t_{j}\leq 1/\exp^{n}2,$ $j=1,2,\cdots l,$ then we have
\begin{equation}\label{log}
\log_{}^{n}\left(\frac{1}{t_{1}t_{2}\cdots t_{l}}\right)\leq \left(\log_{}^{n}\frac{1}{t_{1}}\right)\left(\log_{}^{n}\frac{1}{t_{2}}\right)\cdots \left(\log_{}^{n}\frac{1}{t_{l}}\right).
\end{equation} 
\end{lemma}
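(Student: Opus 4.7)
The plan is to proceed by induction on $n$, reducing the general iterated-logarithm inequality to the $n=1$ case, which is an elementary comparison between a sum and a product of numbers that are all at least $2$.

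For the base case $n=1$, I would set $a_j=\log(1/t_j)$. The hypothesis $t_j\le 1/\exp^{1}(2)=e^{-2}$ gives $a_j\ge 2$, and the desired inequality becomes
\[
a_1+a_2+\cdots+a_l \;\le\; a_1 a_2\cdots a_l.
\]
I would prove this by induction on $l$. The case $l=1$ is an equality. For the step, assuming $\sum_{j<l}a_j\le \prod_{j<l}a_j$, it suffices to show $\prod_{j<l}a_j+a_l\le\prod_{j}a_j$, i.e.\ $a_l\le \prod_{j<l}a_j\,(a_l-1)$. Since $\prod_{j<l}a_j\ge 2$ and $a_l\ge 2$, we have $\prod_{j<l}a_j(a_l-1)\ge 2(a_l-1)\ge a_l$, finishing the base case.

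For the inductive step assume the statement holds for $n-1$. Set $s_j=1/t_j\ge \exp^{n}(2)$ and $b_j=\log s_j$, so $b_j\ge \exp^{n-1}(2)\ge 2$. Then
\[
\log^{n}\!\Bigl(\frac{1}{t_1\cdots t_l}\Bigr)=\log^{n-1}(b_1+\cdots+b_l)
\;\le\; \log^{n-1}(b_1 b_2\cdots b_l),
\]
where the inequality uses the $n=1$ case applied to the $b_j$'s together with the monotonicity of $\log^{n-1}$. Now each $1/b_j\le 1/\exp^{n-1}(2)$, so the induction hypothesis for $n-1$ (applied with the values $1/b_j$ in place of $t_j$) yields
\[
\log^{n-1}(b_1\cdots b_l)\;\le\; \prod_{j=1}^{l}\log^{n-1}(b_j)\;=\;\prod_{j=1}^{l}\log^{n}\!\frac{1}{t_j},
\]
which is exactly \eqref{log}.

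The whole argument is formal once the bookkeeping of thresholds is right; the only real point that needs care is verifying that at every stage of the induction the operand of $\log^{k}$ is large enough (specifically $\ge 2$) so that the $n=1$ lemma applies and $\log^{k}$ remains well-defined and increasing. The threshold $1/\exp^{n}(2)$ in the hypothesis is tuned precisely so that after one application of $\log$ one lands above $\exp^{n-1}(2)$, which is why the induction closes cleanly; this compatibility of thresholds is the only mild obstacle to overcome.
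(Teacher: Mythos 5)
Your proof is correct, and the overall strategy matches the paper's: both reduce to the elementary inequality $a_1+\cdots+a_l\le a_1\cdots a_l$ for reals $a_j\ge 2$, and both then induct on $n$. There are two modest organizational differences. For the base case, the paper bounds $\sum_j a_j\le l\cdot\max_j a_j$ and $\prod_j a_j\ge 2^{\,l-1}\max_j a_j$ and invokes $l\le 2^{\,l-1}$, whereas you induct on $l$ via $a_l\le\bigl(\prod_{j<l}a_j\bigr)(a_l-1)$; both are equally elementary. In the inductive step, the paper uses the decomposition $\log^{k+1}=\log\circ\log^{k}$, first applying the induction hypothesis to the inner $\log^{k}$ and then the $n=1$ case to the remaining outer $\log$ (with the quantities $1/\log^{k}(1/t_j)\le e^{-2}$), whereas you use $\log^{n}=\log^{n-1}\circ\log$, first applying the $n=1$ case at the inner level to $b_j=\log(1/t_j)$ and then the $(n-1)$-hypothesis to the outer $\log^{n-1}$ with $1/b_j\le 1/\exp^{n-1}(2)$. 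Both variants close because the threshold $1/\exp^{n}(2)$ is tuned so that one application of $\log$ moves you past the next threshold, and neither direction buys anything over the other; the choice is a matter of taste.
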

\begin{proof}
We denote $t_{j_{0}}= \min\{t_{j}, 1\leq j \leq l\}.$ 
Consider first the case that $n=1$ and let $u_{1}=\log (1/t_{j_{0}}).$ 
Then we have
\begin{equation}\label{logl}
\log \frac{1}{t_{1}t_{2}\cdots t_{l}}
=\sum_{j=1}^{l}\log \frac{1}{t_{j}}
\leq u_{1}l.
\end{equation}
On the other hand noting that $1/t_{j}\geq\exp^{n} 2=\exp 2,$
\begin{equation}\label{logr}
\left(\log \frac{1}{t_{1}}\right)\left(\log \frac{1}{t_{2}}\right)\cdots \left(\log \frac{1}{t_{l}}\right)\geq 2^{l-1}\log \frac{1}{t_{j_{0}}}=u_{1}2^{l-1}.
\end{equation}
Since $l\leq 2^{l-1}$ for $l\geq 1,$ we deduce from \eqref{logl} 
and \eqref{logr} that \eqref{log} holds for $n=1$ and $t_{j}\leq e^{-2},$ $j=1,2,\cdots l,$ that is,
\begin{equation}\label{1}
\log^{}\left(\frac{1}{t_{1}t_{2}\cdots t_{l}}\right)\leq \left(\log^{}\frac{1}{t_{1}}\right)\left(\log^{}\frac{1}{t_{2}}\right)\cdots \left(\log^{}\frac{1}{t_{l}}\right).
\end{equation}
We now prove the conclusion by induction.
Suppose that \eqref{log} holds for $n=k$ and $t_{j}\leq 1/\exp^{k}2$ for $j=1,2,\cdots l,$ which is,
\begin{equation}\label{k}
\log_{}^{k}\left(\frac{1}{t_{1}t_{2}\cdots t_{l}}\right)\leq \left(\log_{}^{k}\frac{1}{t_{1}}\right)\left(\log_{}^{k}\frac{1}{t_{2}}\right)\cdots \left(\log_{}^{k}\frac{1}{t_{l}}\right).
\end{equation}
 Suppose that $t_{j}\leq 1/\exp^{k+1}2$ for $j=1,2,\cdots l.$ Therefore $1/\log^{k}(1/t_{j})\leq e^{-2}.$ Then from \eqref{k} and \eqref{1} we obtain
\begin{align*}
\log_{}^{k+1}\left(\frac{1}{t_{1}t_{2}\cdots t_{l}}\right)
&=\log_{} \log_{}^{k}\left(\frac{1}{t_{1}t_{2}\cdots t_{l}}\right)\\
&\leq \log_{} \left(\left(\log_{}^{k}\frac{1}{t_{1}}\right)\left(\log_{}^{k}\frac{1}{t_{2}}\right)\cdots \left(\log_{}^{k}\frac{1}{t_{l}}\right)\right)\\
&\leq \left(\log_{} \log_{}^{k} \frac{1}{t_{1}}\right)\left(\log \log^{k} \frac{1}{t_{2}}\right)\cdots \left(\log \log^{k} \frac{1}{t_{l}}\right)\\
&=\left(\log^{k+1}\frac{1}{t_{1}}\right)\left(\log^{k+1}\frac{1}{t_{2}}\right)\cdots \left(\log^{k+1}\frac{1}{t_{l}}\right),
\end{align*}
from which we see that \eqref{log} holds for $n=k+1$ if $t_{j}\leq 1/\exp^{k+1}2,$ $j=1,2,\cdots l.$  
\end{proof}
\begin{lemma}\label{productive}
Let $n, l \in \mathbb{N}$ and $l\geq 1.$ Set $h(t)$ as in \eqref{h}. Suppose that $t_{j}\leq 1/\exp^{n}2$, for $1\leq j \leq l$.  Then we have
\begin{equation}\label{mp}
h(t_{1}t_{2}\cdots t_{j})\leq \prod_{j=1}^{l} h(t_{j}).
\end{equation}
\end{lemma}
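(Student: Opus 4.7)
The plan is to reduce the claim to the previous Lemma \ref{logpreaddtive} by separating the monomial factor and the iterated-logarithm factor of $h$. Writing out the definition of $h$, the left-hand side of \eqref{mp} becomes
\[
h(t_1 t_2 \cdots t_l) = (t_1 t_2 \cdots t_l)^2 \left(\log^n \frac{1}{t_1 t_2 \cdots t_l}\right)^{\gamma},
\]
while the right-hand side factors as
\[
\prod_{j=1}^l h(t_j) = (t_1 t_2 \cdots t_l)^2 \prod_{j=1}^l \left(\log^n \frac{1}{t_j}\right)^{\gamma}.
\]
The $t^2$-parts match exactly, so it suffices to compare the iterated-log factors.

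Under the hypothesis $t_j \leq 1/\exp^n 2$, Lemma \ref{logpreaddtive} gives
\[
\log^n \frac{1}{t_1 t_2 \cdots t_l} \leq \prod_{j=1}^l \log^n \frac{1}{t_j}.
\]
Since $t_j \leq 1/\exp^n 2 < 1$ forces each factor $\log^n(1/t_j)$ to be positive (indeed at least $\log 2 > 0$ after the final $\log$), both sides of this inequality are positive. Raising to the positive power $\gamma$ preserves the inequality, which yields precisely the required bound on the log-factors, and multiplying through by $(t_1 \cdots t_l)^2$ completes the proof.

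There is no substantial obstacle here: Lemma \ref{logpreaddtive} does all the real work, and the only thing to check is the trivial multiplicativity of the $t^2$ factor plus monotonicity of $x \mapsto x^{\gamma}$ on the positive reals. I would present the argument as two displayed equations (identifying the two sides) followed by a one-line appeal to Lemma \ref{logpreaddtive}.
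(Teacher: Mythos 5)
Your proof is correct and follows exactly the same route as the paper: factor $h$ into the $t^2$ part (which is trivially multiplicative) and the iterated-logarithm part, invoke Lemma \ref{logpreaddtive}, and raise to the power $\gamma$. One small slip worth noting but harmless: under $t_j\leq 1/\exp^n 2$ one actually has $\log^n(1/t_j)\geq 2$, not merely $\geq\log 2$, though either bound suffices for the positivity you need.
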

\begin{proof} 
Since $t_{j}\leq 1/\exp^{n}2$ for $1\leq j \leq l$ we deduce from \eqref{h} and \eqref{log} that
\begin{align*}
h(t_{1}t_{2}\cdots t_{l})
&=(t_{1}t_{2}\cdots t_{l})^{2} \left(\log^{n}\frac{1}{t_{1}t_{2}\cdots t_{l}}\right)^{\gamma}\\
&\leq (t_{1}t_{2}\cdots t_{l})^{2}\left(\left(\log^{n}\frac{1}{t_{1}}\right)\left(\log^{n}\frac{1}{t_{2}}\right)\cdots \left(\log^{n}\frac{1}{t_{l}}\right)\right)^{\gamma}\\
&=t_{1}^{2}\left(\log^{n}\frac{1}{t_{1}}\right)^{\gamma}t_{2}^{2}\left(\log^{n}\frac{1}{t_{2}}\right)^{\gamma}\cdots t_{l}^{2}\left(\log^{n}\frac{1}{t_{l}}\right)^{\gamma}\\
&=h(t_{1})h(t_{2})\cdots h(t_{l}).
\end{align*}
Therefore we obtain \eqref{mp}.
\end{proof}

\begin{lemma}\label{concave}
Suppose that $h(t)$ is defined as in \eqref{h} for $\gamma >0$ and $n\in\mathbb{N}.$ Define the function
$G(t)=h(\sqrt{t}).$ Then $G(t)$ is increasing and concave on $(0, \delta_{n}^{2}],$ where $\delta_{n}=1/\exp^{n}\gamma.$
\end{lemma}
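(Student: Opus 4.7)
The plan is to verify both properties by direct differentiation after the natural substitution $s=\sqrt{t}$, which rewrites the question in terms of $h$ itself on $(0,\delta_{n}]$. By the chain rule,
\[
G'(t)=\frac{h'(\sqrt{t})}{2\sqrt{t}},\qquad G''(t)=\frac{\sqrt{t}\,h''(\sqrt{t})-h'(\sqrt{t})}{4\,t^{3/2}},
\]
so the two claims reduce to proving that $h'(s)>0$ and $sh''(s)\le h'(s)$ on $(0,\delta_{n}]$.

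The essential bookkeeping is for the iterated logarithms. Writing $L_{j}(s)=\log^{j}(1/s)$, an easy induction gives $L_{j}'(s)=-1/(s\,L_{1}\cdots L_{j-1})$, and hence $L_{j}'(s)/L_{j}(s)=-1/(s\,L_{1}\cdots L_{j})$. Applying the product rule to $h(s)=s^{2}L_{n}(s)^{\gamma}$ yields
\[
h'(s)=sL_{n}^{\gamma}\left(2-\frac{\gamma}{L_{1}L_{2}\cdots L_{n}}\right).
\]
The hypothesis $s\le\delta_{n}=1/\exp^{n}(\gamma)$ gives $L_{j}(s)\ge\exp^{n-j}(\gamma)$ for $1\le j\le n$, so in particular $L_{n}\ge\gamma$ and $L_{j}\ge 1$ for $j<n$. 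Consequently $L_{1}\cdots L_{n}\ge\gamma$, the parenthesised factor is bounded below by $1$, and $h'(s)>0$, which proves monotonicity of $G$.

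For concavity I would abbreviate $A(s)=2-\gamma/(L_{1}\cdots L_{n})$, so $h'(s)=sL_{n}^{\gamma}A(s)$, and differentiate once more. Two terms cancel, and what remains is
\[
h'(s)-sh''(s)=sL_{n}^{\gamma}\left(\frac{\gamma A(s)}{L_{1}\cdots L_{n}}-sA'(s)\right).
\]
Differentiating $A$ by the same rule produces $sA'(s)=-\bigl(\gamma/(L_{1}\cdots L_{n})\bigr)\sum_{j=1}^{n}1/(L_{1}\cdots L_{j})$, which is negative, while $A(s)\ge 1>0$ by the previous paragraph; hence both summands in the bracket are nonnegative, and therefore $G''(t)\le 0$.

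The only step that is at risk of being awkward is the nested differentiation of the iterated logarithms, but once the one-line identity $L_{j}'/L_{j}=-1/(sL_{1}\cdots L_{j})$ is in place, everything collapses to the single bound $L_{1}\cdots L_{n}\ge\gamma$, which is precisely what the choice $\delta_{n}=1/\exp^{n}(\gamma)$ is designed to guarantee.
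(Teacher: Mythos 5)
Your proof is correct. The underlying method is the same as the paper's (direct differentiation with the iterated-logarithm chain rule, plus the bound $L_1\cdots L_n\ge\gamma$ that the choice $\delta_n=1/\exp^n(\gamma)$ supplies), but you reorganize it by substituting $s=\sqrt{t}$ and working with $h'$ and $h''$ instead of with $G'$ directly, which is a cosmetic change. The more substantive difference is on the concavity step: the paper derives the explicit formula \eqref{gprime} for $G'(t)$, proves $G'\ge 0$ from it, and then dismisses concavity with the remark that $G'$ is decreasing "with a short observation" (namely, that both factors in \eqref{gprime} are positive and decreasing in $t$, so their product is). You instead compute $h'-sh''$ explicitly, show it equals $sL_n^{\gamma}\bigl(\gamma A/(L_1\cdots L_n)-sA'\bigr)$, and check both bracketed summands are positive. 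This is more work but it makes the omitted step fully rigorous, so your write-up is actually more complete than the paper's on this point.
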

\begin{proof}
According to the definition and \eqref{h} we have
$$G(t)= t\left(\log_{}^{n}\frac{1}{\sqrt{t}}\right)^{\gamma}.$$
Thus
\begin{align}
G'(t)&=\left(\log_{}^{n}\frac{1}{\sqrt{t}}\right)^{\gamma}+ t\left(\left(\log_{}^{n}\frac{1}{\sqrt{t}}\right)^{\gamma}\right)'\nonumber\\ 
&=\left(\log_{}^{n}\frac{1}{\sqrt{t}}\right)^{\gamma}- \frac{1}{2}\gamma\frac{\left(\log_{}^{n}\frac{1}{\sqrt{t}}\right)^{\gamma}}{\left(\log_{}^{n}\frac{1}{\sqrt{t}}\right)\left(\log_{}^{n-1}\frac{1}{\sqrt{t}}\right)\cdots \left(\log_{}^{}\frac{1}{\sqrt{t}}\right)}\nonumber\\ 
&=\left(\log_{}^{n}\frac{1}{\sqrt{t}}\right)^{\gamma}\left(1-\frac{\frac{\gamma}{2}}{\left(\log_{}^{n}\frac{1}{\sqrt{t}}\right)\left(\log_{}^{n-1}\frac{1}{\sqrt{t}}\right)\cdots \left(\log_{}^{}\frac{1}{\sqrt{t}}\right)}\right)\label{gprime}.
\end{align}
If $t\leq\delta_{n}^{2}= (1/\exp^{n}(\gamma))^{2}$ then 
 $$\left(\log^{n}\frac{1}{\sqrt{t}}\right)\left(\log^{n-1}\frac{1}{\sqrt{t}}\right)\cdots\log\frac{1}{\sqrt{t}}\geq \gamma \exp(\gamma)\cdots \exp^{n-1}(\gamma)\geq \gamma> \frac{\gamma}{2},$$
which yields with \eqref{gprime} that $$G'(t)\geq 0.$$
Hence $G(t)$ is increasing. One may also find that $G'(t)$ is decreasing on $(0,\delta_{n}^{2}]$ with a short observation of \eqref{gprime}.
And therefore $G(t)$ is a concave function on $(0, \delta_{n}^{2}].$
\end{proof}

\section{Notations and lemmas}\label{lemmas}
The following lemma is known as Iversen's theorem, see e.g. \cite[chapter 5]{GoldenbergOstrovskii2008}.
\begin{lemma}\label{infinitelypoles}
Let $f$ be a transcendental meromorphic function for which $\infty$ is not an asymptotic value. Then $f$ has infinitely many poles.
\end{lemma}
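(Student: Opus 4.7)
I plan to argue the contrapositive: assuming $f$ has only finitely many poles $p_1,\dots,p_k$ (with multiplicities $m_1,\dots,m_k$), I want to show $\infty$ is an asymptotic value of $f$. A convenient first observation is that $g(z):=P(z)f(z)$, where $P(z):=\prod_{j=1}^{k}(z-p_j)^{m_j}$, is a transcendental entire function; the fact that $g$ outgrows the polynomial $P$ will be the essential analytic input.

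The central step is to construct, for every sufficiently large $R$, an unbounded connected component of the tract $A_R:=\{z\in\mathbb{C}:|f(z)|>R\}$. First, the maximum modulus principle rules out bounded components of $A_R$ that contain no pole: on the closure of such a component, $f$ would be holomorphic with $|f|=R$ on the boundary and $|f|>R$ inside, a contradiction. Hence every bounded component of $A_R$ contains at least one of the finitely many poles, giving only finitely many bounded components. Second, since $g$ is transcendental entire, $M(r,g)$ outgrows every polynomial, so there exist $z_n$ with $|z_n|\to\infty$ and $|f(z_n)|=|g(z_n)|/|P(z_n)|\to\infty$; hence $A_R$ is unbounded and must possess at least one unbounded component $V_R$.

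The asymptotic path is then obtained by choosing $R_n\to\infty$ with nested unbounded tracts $V_{R_1}\supset V_{R_2}\supset\cdots$, picking $z_n\in V_{R_n}$ with $|z_n|\ge n$, and concatenating polygonal arcs from $z_n$ to $z_{n+1}$ that lie inside $V_{R_n}$. Along this path $|f|\ge R_n\to\infty$, so $\infty$ is an asymptotic value of $f$.

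The main obstacle lies in the nesting step: to produce $V_{R_{n+1}}\subset V_{R_n}$ one must know that $f$ is unbounded on each tract $V_{R_n}$. The argument is again via the transcendental growth of $g$: if $|f|\le M$ on $V_{R_n}$ then $|g|\le M|P|$ would give polynomial growth of $g$ on an unbounded domain whose boundary is a level set of $f$, contradicting the transcendental growth of $g$; this is where a Phragm\'en--Lindel\"of type estimate enters. Once $f$ is known to be unbounded on $V_{R_n}$, the previous max-modulus argument applied inside $V_{R_n}$ yields an unbounded component of $V_{R_n}\cap A_{R_{n+1}}$ to serve as $V_{R_{n+1}}$. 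This nesting/growth argument is precisely the substantive content of the classical Iversen theorem as presented in \cite[Chapter~5]{GoldenbergOstrovskii2008}, to which I would ultimately appeal for the routine verifications.
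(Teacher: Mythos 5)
The paper does not prove this lemma; it is stated as Iversen's theorem and attributed to \cite[Chapter 5]{GoldenbergOstrovskii2008}, so there is no in-text argument to compare against. Your proposal is a sketch of one standard route to that theorem: pass to the transcendental entire function $g=Pf$, observe via the maximum principle that the tracts $A_R=\{|f|>R\}$ have only finitely many bounded components (each must contain a pole), note that $A_R$ is unbounded because $M(r,g)$ outgrows $|P|$, and then try to nest unbounded tracts into an asymptotic path. The reduction and the bounded-component argument are correct.

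The gap is in the step you yourself flag, and the fix you propose does not close it. You argue that if $|f|\le M$ on an unbounded component $V_{R_n}$ of $A_{R_n}$, then $|g|\le M|P|$ on $V_{R_n}$, and that ``polynomial growth of $g$ on an unbounded domain'' contradicts the transcendence of $g$. It does not: transcendence of $g$ is a statement about $M(r,g)=\max_{|z|=r}|g(z)|$, and this maximum may be attained in a different unbounded component of $A_{R_n}$ (of which there can be infinitely many), or the bound on a single tract may simply be irrelevant to the global growth. So no contradiction is obtained from a polynomial bound on one tract, and the nesting $V_{R_1}\supset V_{R_2}\supset\cdots$ is not established. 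What is actually needed is a global statement, for instance that $f$ is unbounded on \emph{some} unbounded component of each $A_R$ in a way compatible with nesting; this is precisely the substantive content of Iversen's theorem, and proving it requires a genuinely global tool (a Phragm\'en--Lindel\"of/harmonic-measure estimate on the complement of a tract, or Wiman--Valiron theory to produce a path on which $|g|$ is close to $M(|z|,g)$ and hence dominates $|P|$). In effect, your write-up re-derives the easy reductions and then defers the hard step to the same source the paper cites, while the bridging argument offered in between is not valid as stated.
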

 We recall Koebe's theorem, which is usually stated only for univalent functions defined in the open unit disk, see \cite[Theorem 1.6]{Pommerenke1975}, but the following version follows immediately from this special case, see \cite[Lemma 2.1]{BergweilerKotus2012}. 
 
 For $a\in \mathbb{C}$ and $r>0$ we use the notation $D(a,r)=\left\{z\in \mathbb{C}\colon|z-a|<r\right\}$.
\begin{lemma}\label{koebe}
 Let $g\colon D(a,r)\rightarrow \mathbb{C}$ be univalent, $0<\lambda<1$ and $z\in D(a,\lambda r).$  Then
\begin{equation}\label{square}
\frac{\lambda}{(1+\lambda)^{2}}\leq \frac{|g(z)-g(a)|}{|(z-a)g'(a)|}\leq \frac{\lambda}{(1-\lambda)^{2}},
\end{equation}
\begin{equation}\label{cube}
\frac{1-\lambda}{(1+\lambda)^{3}}\leq \frac{|g'(z)|}{|g'(a)|} \leq \frac{1+\lambda}{(1-\lambda)^{3}},
\end{equation}
and
\begin{equation}\label{image}
g(D(a,r))\supset D\left(g(a),\frac{1}{4}|g'(a)|r\right).
\end{equation}
\end{lemma}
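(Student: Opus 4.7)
The plan is to deduce the lemma from the classical Koebe distortion theorem on the unit disk. That theorem asserts that if $F\colon\mathbb{D}\to\mathbb{C}$ is univalent with $F(0)=0$ and $F'(0)=1$, then
\[
\frac{|w|}{(1+|w|)^{2}}\le |F(w)|\le \frac{|w|}{(1-|w|)^{2}},\qquad \frac{1-|w|}{(1+|w|)^{3}}\le |F'(w)|\le \frac{1+|w|}{(1-|w|)^{3}},
\]
together with $F(\mathbb{D})\supset D(0,1/4)$; this is the normalised version cited from \cite[Theorem 1.6]{Pommerenke1975}.

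First I would pass to this normalisation by setting
\[
F(w)=\frac{g(a+rw)-g(a)}{r\,g'(a)},\qquad w\in\mathbb{D}.
\]
The univalence of $g$ on $D(a,r)$ forces $g'(a)\neq 0$, so $F$ is well defined and univalent on $\mathbb{D}$, and a direct computation gives $F(0)=0$ and $F'(0)=1$. For a given $z\in D(a,\lambda r)$ I would then set $w=(z-a)/r$, which satisfies $|w|\le\lambda$, and substitute into the three classical estimates above. A routine rearrangement, together with the chain-rule identity $|F'(w)|=|g'(z)|/|g'(a)|$, converts them into bounds for $|g(z)-g(a)|/(|z-a|\,|g'(a)|)$ and for $|g'(z)|/|g'(a)|$ expressed in terms of $|w|$.

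To pass from the pointwise bounds in $|w|$ to the uniform bounds in $\lambda$ appearing in the lemma, I would use that each of the functions $x\mapsto x/(1\pm x)^{2}$ and $x\mapsto (1\pm x)/(1\mp x)^{3}$ is monotonic on $[0,1)$, so replacing $|w|$ by its upper bound $\lambda$ on the appropriate side of each inequality preserves it; this yields \eqref{square} and \eqref{cube}. Finally, \eqref{image} is just Koebe's $1/4$-theorem applied to $F$ and then pulled back through the affine map $\zeta\mapsto g(a)+r\,g'(a)\,\zeta$, which sends $D(0,1/4)$ onto $D(g(a),|g'(a)|r/4)$. There is no real obstacle here; the only substantive step is recognising that the change of variable in the definition of $F$ exactly normalises $g$, after which the lemma is a direct translation of the classical statement.
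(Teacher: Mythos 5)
Your approach --- normalise via $F(w)=(g(a+rw)-g(a))/(r\,g'(a))$, apply the classical distortion, growth and quarter theorems on the unit disk, then pull back through the affine map --- is the standard one and is exactly what the paper means by saying the lemma ``follows immediately'' from \cite[Theorem 1.6]{Pommerenke1975}; the paper itself gives no proof, so you are not diverging from it, and \eqref{cube} and \eqref{image} do transfer exactly as you describe.

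However, the ``routine rearrangement'' you invoke for \eqref{square} does not produce the inequality as printed, and the discrepancy is worth pinning down. After the normalisation one has $|F(w)|=|g(z)-g(a)|/(r|g'(a)|)$, and the growth theorem gives $|w|/(1+|w|)^{2}\le|F(w)|\le|w|/(1-|w|)^{2}$. To reach the quotient $|g(z)-g(a)|/(|z-a|\,|g'(a)|)$ that appears in \eqref{square} you must divide through by $|w|=|z-a|/r$, after which the outer functions become $1/(1\pm|w|)^{2}$ rather than the $|w|/(1\pm|w|)^{2}$ you quote; the monotonicity step then yields $1/(1+\lambda)^{2}\le |g(z)-g(a)|/(|z-a|\,|g'(a)|)\le 1/(1-\lambda)^{2}$, i.e.\ \emph{without} the extra factor $\lambda$. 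In fact the printed upper bound $\lambda/(1-\lambda)^{2}$ is false in general: the quotient tends to $1$ as $z\to a$, while $\lambda/(1-\lambda)^{2}<1$ whenever $\lambda<(3-\sqrt{5})/2$ (one sees this already for the Koebe function on $\mathbb{D}$ with $\lambda=1/4$ and $z$ near $0$). The estimate carrying the extra $\lambda$ is the one with $r|g'(a)|$ in the denominator instead of $|(z-a)g'(a)|$ --- and this is what the applications in Section~\ref{proof1} actually use, since \eqref{supdisk} bounds $\diam U_{j}$ via the radius $R^{-1/m_{j}}$, not via $|z-a|$. So the lemma as printed has a slip, and your argument, carried out carefully, should have flagged which of the two normalisations of the middle quantity you are bounding, since they carry different constants.
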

 Rippon-Stallard \cite[Lemma 2.1]{RipponStallard1999} proved the following result while Bergweiler-Kotus \cite[Lemma 2.2]{BergweilerKotus2012} made a supplement.
 
Designate $B(R)=\left\{z\in \mathbb{C}\colon |z|>R\right\} \cup \left\{\infty \right\}.$
\begin{lemma}\label{component}
 Let $f\in \mathcal{B}$ be transcendental. If $R>0$ such that sing$(f^{-1})\subset D(0,R),$ then all components of $f^{-1}(B(R))$ are simply-connected. Moreover, if $\infty$ is not an asymptotic value of $f$ then all components of $f^{-1}(B(R))$ are bounded and contain exactly one pole of $f.$
\end{lemma}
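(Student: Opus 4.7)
The plan is to analyse each component $U$ of $f^{-1}(B(R))$ by covering space theory. Since $\mathrm{sing}(f^{-1})\subset D(0,R)$, the map $f$ has no finite critical or asymptotic values lying over the annulus $B(R)\setminus\{\infty\}=\{|z|>R\}$, so the restriction
\[
f\colon U\setminus f^{-1}(\infty)\longrightarrow B(R)\setminus\{\infty\}
\]
is an unbranched covering of a punctured disk, whose fundamental group is $\mathbb{Z}$. The cover is therefore classified by a subgroup of $\mathbb{Z}$: either $n\mathbb{Z}$ with $n\geq 1$ (a finite cyclic cover whose total space is again a punctured disk) or the trivial subgroup (the universal cover, which is a half-plane).

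In the cyclic case the puncture in $U\setminus f^{-1}(\infty)$ corresponds to a single pole of $f$ of multiplicity $n$, and filling it in exhibits $U$ as a topological disk; in the universal-cover case $U$ contains no pole and is itself the simply connected half-plane. Either way $U$ is simply connected, proving the first assertion.

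For the moreover part, assume $\infty$ is not an asymptotic value of $f$. The universal-cover case is identified with $\exp\colon\{\mathrm{Re}\,\zeta>\log R\}\to\{|z|>R\}$, and pulling back the ray $\zeta=t\to+\infty$ gives a curve $z(t)\in U$ with $f(z(t))=e^{t}\to\infty$; any finite accumulation point of $z(t)$ would have to be a pole of $f$ and hence an interior point of some component of $f^{-1}(B(R))$, but $z(t)\in U$ forces that component to be $U$ itself, contradicting the fact that $U$ has no pole in this case. So $z(t)\to\infty$ in $\mathbb{C}$ and $\infty$ is an asymptotic value, contradicting the hypothesis; only the cyclic case occurs, and each component contains exactly one pole. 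For boundedness, if a cyclic component $U$ were unbounded, pick Riemann maps $\phi\colon\mathbb{D}\to U$ and $\psi\colon\mathbb{D}\to B(R)$ with $\psi(0)=\infty$; then $B:=\psi^{-1}\circ f\circ\phi$ is a proper $n$-to-one self-map of $\mathbb{D}$, i.e.\ a finite Blaschke product, extending continuously to $\overline{\mathbb{D}}$, and $\psi$ also extends continuously since $\partial B(R)=\{|z|=R\}$ is a Jordan curve. A Lindel\"of argument applied to the bounded function $1/(\phi-z_0)$ for some $z_0$ in the interior of $\mathbb{C}\setminus U$ produces a point $\zeta_0\in\partial\mathbb{D}$ where $\phi$ has radial limit $\infty$; along the corresponding radius one gets $f\circ\phi(r\zeta_0)\to\psi(B(\zeta_0))$, a finite value of modulus $R$, which is then an asymptotic value of $f$ on $\{|w|=R\}$, contradicting $\mathrm{sing}(f^{-1})\subset D(0,R)$.

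The main obstacle is the rigorous execution of the covering-space set-up together with the final boundedness step, where one must convert accessibility of $\infty$ from the unbounded domain $U$ into an honest \emph{asymptotic} value rather than a mere cluster value; the finite Blaschke product, which extends continuously to $\overline{\mathbb{D}}$, together with the Lindel\"of-type radial-limit argument for the unbounded Riemann map $\phi$, is the cleanest way to do this.
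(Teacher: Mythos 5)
The paper does not prove this lemma; it is quoted from Rippon--Stallard (Lemma 2.1 of that paper, for simple connectivity) and Bergweiler--Kotus (Lemma 2.2, for the ``moreover'' part). Your covering-space set-up is exactly the approach those sources take, and the part where you rule out the universal-cover (logarithmic-tract) case is correct and complete: if $z(t)$ had a finite accumulation point $a$, then $a$ would be a pole, $a$ would belong to $U$ by the open-neighbourhood argument, contradicting that the tract has no pole; hence $z(t)\to\infty$ and $\infty$ would be an asymptotic value.

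There are, however, two places where the write-up jumps over a genuine step. First, in the cyclic case you assert that the puncture of $V:=U\setminus f^{-1}(\infty)\cong\mathbb{D}^{*}$ ``corresponds to a single pole.'' A priori the puncture end of $V$ could sit over $\infty\in\hat{\mathbb{C}}$, in which case $V=U$ is an unbounded doubly connected domain with no pole, and then $U$ is not simply connected. This possibility is excluded only because it would force $1/f$ to extend holomorphically to $\infty$ with a zero, i.e.\ $f$ would have a pole at $\infty$ of order $n$, contradicting transcendence; you need to say this. Second, and more seriously, the boundedness argument is circular as written. Lindel\"of's theorem upgrades a limit of a bounded analytic function along \emph{some given} curve ending at $\zeta_{0}\in\partial\mathbb{D}$ to a radial (nontangential) limit; it does not manufacture such a curve. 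To invoke it for $1/(\phi-z_{0})$ you must already know that $\infty$ is accessible from $U$ (equivalently, that $\phi\to\infty$ along some curve in $\mathbb{D}$), which is precisely what you are trying to establish. The clean way to close this gap in the present situation is to observe that $\partial U$ is a connected component of the level set $\{|f|=R\}$, which is a \emph{smooth} real-analytic $1$-manifold because $f$ has no critical points there (a critical point on $\{|f|=R\}$ would give a critical value of modulus $R$, contradicting $\operatorname{sing}(f^{-1})\subset D(0,R)$). If $U$ is unbounded, this component is a proper embedded arc, so $\partial U\cup\{\infty\}$ is a Jordan curve in $\hat{\mathbb{C}}$; Carath\'eodory's theorem then gives a continuous extension of $\phi$ to $\overline{\mathbb{D}}$ with $\phi(\zeta_{0})=\infty$ for a unique $\zeta_{0}$, and along the radius to $\zeta_{0}$ one has $f\circ\phi\to\psi(B(\zeta_{0}))\in\{|w|=R\}$, producing a finite asymptotic value outside $D(0,R)$, which is the desired contradiction. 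With these two repairs your proof agrees in substance with the one in the cited sources.
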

We continue with Jensen's inequality \cite[p.12]{Pachpatte2005}, one of the crucial tools used in our proof.
\begin{lemma}\label{Jensen}
Suppose that $I$ is an interval and the function $f\colon I\rightarrow \mathbb{R}$ is concave. For any points $x_{1}, x_{2},\cdots, x_{n}\in I$ and any real nonnegative numbers $r_{1}, r_{2}, \cdots, r_{n}$ such that $r_{1}+r_{2}+\cdots r_{n}= 1,$ we have
$$f\left(\sum_{j=1}^{n}r_{j}x_{j}\right)\geq \sum_{j=1}^{n}r_{j}f(x_{j}).$$
\end{lemma}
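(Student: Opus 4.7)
The plan is to prove the inequality by induction on $n$, treating the defining inequality of concavity as the base case. Recall that $f$ being concave on the interval $I$ means precisely that for every $x,y\in I$ and every $t\in[0,1]$ one has $f(tx+(1-t)y)\geq tf(x)+(1-t)f(y)$. This is exactly the claim for $n=2$ (with weights $r_1=t,\ r_2=1-t$), and the case $n=1$ is trivial since $r_1=1$ forces $f(r_1x_1)=r_1f(x_1)$.

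For the inductive step, assume the result for $n-1\geq 2$, and consider points $x_1,\dots,x_n\in I$ and nonnegative weights $r_1,\dots,r_n$ summing to $1$. First I would dispose of the degenerate case $r_n=1$, in which all other weights vanish and the inequality collapses to the trivial equality $f(x_n)=f(x_n)$. Otherwise set $s:=1-r_n>0$, so that the renormalized weights $r_j/s$ $(j=1,\dots,n-1)$ are nonnegative and sum to $1$. The crucial algebraic identity is
$$\sum_{j=1}^{n}r_jx_j \;=\; r_n x_n + s\sum_{j=1}^{n-1}\frac{r_j}{s}x_j.$$
Denoting the convex combination $y:=\sum_{j=1}^{n-1}(r_j/s)x_j$, one observes that $y\in I$ because any interval is convex.

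Now I would apply the $n=2$ case (i.e., the definition of concavity) to the two points $x_n$ and $y$ with weights $r_n$ and $s$ respectively, yielding
$$f\!\left(r_nx_n+sy\right)\;\geq\;r_nf(x_n)+sf(y).$$
Next, the inductive hypothesis applied to $y$ with the weights $r_j/s$ gives
$$f(y)\;\geq\;\sum_{j=1}^{n-1}\frac{r_j}{s}f(x_j).$$
Multiplying through by $s>0$ (which preserves the inequality) and combining with the previous display produces
$$f\!\left(\sum_{j=1}^{n}r_jx_j\right)\;\geq\;r_nf(x_n)+\sum_{j=1}^{n-1}r_jf(x_j)\;=\;\sum_{j=1}^{n}r_jf(x_j),$$
which closes the induction.

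There is no substantive obstacle here: the only bookkeeping step that requires a moment of care is verifying that the auxiliary point $y$ lies in $I$ so that $f(y)$ is defined and the induction hypothesis is applicable, but this follows immediately from the convexity of intervals. The argument is entirely formal once the $n=2$ case is identified with the definition of concavity.
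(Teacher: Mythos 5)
Your induction is the standard and correct proof of Jensen's inequality for concave functions: the base case is the definition of concavity, the renormalization $r_j/s$ with $s=1-r_n$ is handled properly (including the degenerate case $r_n=1$), and the auxiliary point $y$ does lie in $I$ by convexity of intervals. Note that the paper does not prove this lemma at all --- it is quoted as a classical result from Pachpatte's book --- so there is nothing to compare against; your argument is a complete and correct substitute.
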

The next lemma from Jank-Volkmann \cite[p.103]{JankVolkmann1985} shows the relation between the $n$-th order and its number of poles for a meromorphic function.

\begin{lemma}\label{nrorder}
Suppose that $f$ is a meromorphic function and its $n$-th order is defined as in Section \ref{introduction} and that $n(r)$ denotes the number of the poles contained in the closed disc $\overline{D(0,r)}.$ Then we have
$$\rho_{n}(f)\geq\limsup_{r\rightarrow \infty}\frac{\log^{n+1}_{+}n(r)}{\log r}.$$
\end{lemma}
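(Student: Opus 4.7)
The plan is to deduce the bound from standard Nevanlinna theory, specifically from the relation between the pole counting function $n(r)$, the integrated counting function $N(r,f)$, and the Nevanlinna characteristic $T(r,f)$.

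First I would recall the two inputs: the identity
\[
N(s,f) = \int_{0}^{s} \frac{n(t)-n(0)}{t}\, dt + n(0)\log s,
\]
valid for $s > 0$, and the inequality $N(s,f) \leq T(s,f)$ from the First Main Theorem. Since $n(t)$ is non-decreasing, restricting the integral to $[r,s]$ with $s = er$ gives
\[
N(er,f) \geq \int_{r}^{er} \frac{n(t)-n(0)}{t}\, dt \geq \bigl(n(r) - n(0)\bigr),
\]
so that $n(r) \leq T(er,f) + n(0)$. Thus there is a constant $C > 0$ (depending only on $f$) with
\[
n(r) \leq C\, T(er, f)
\]
for all $r$ large enough that $T(er, f) \to \infty$.

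Next I would iterate logarithms. For $r$ large enough, both $n(r)$ and $T(er, f)$ exceed $1$, and the elementary estimate $\log_{+}(Cx) \leq \log_{+} x + \log_{+} C$ combined with an easy induction shows that
\[
\log_{+}^{n+1} n(r) \leq \log_{+}^{n+1} T(er,f) + O(1)
\]
as $r \to \infty$ (the additive constants get absorbed at each level of iteration, since after one logarithm the function grows so slowly that multiplicative error terms become additive, and after a second logarithm they become negligible). Dividing by $\log r$ and using $\log r / \log(er) \to 1$ yields
\[
\limsup_{r \to \infty} \frac{\log_{+}^{n+1} n(r)}{\log r}
\leq \limsup_{r \to \infty} \frac{\log_{+}^{n+1} T(er,f)}{\log r}
= \limsup_{s \to \infty} \frac{\log_{+}^{n+1} T(s,f)}{\log s}
= \rho_{n}(f),
\]
which is exactly the desired inequality.

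The only mildly delicate step is the iterated-logarithm estimate; this is routine but requires noting that the additive constants from $\log_{+}(Cx) \leq \log_{+} x + O(1)$ survive under further logarithms only as $o(1)$ corrections relative to $\log r$, so they do not affect the lim sup. Everything else is a direct consequence of standard value-distribution inequalities, so there is no real obstacle.
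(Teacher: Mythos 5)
Your proof is correct. The paper does not actually prove this lemma itself; it is cited directly from Jank--Volkmann \cite[p.103]{JankVolkmann1985}, so there is no in-paper proof to compare against. Your derivation is the standard textbook route. The key step $n(r)\leq C\,T(er,f)$ follows exactly as you write from the definition of $N(r,f)$ and the trivial First Main Theorem inequality $N(r,f)\leq T(r,f)$; in fact, since
\[
N(er,f)\;\geq\;\int_{r}^{er}\frac{n(t)-n(0)}{t}\,dt + n(0)\log(er)\;\geq\;\bigl(n(r)-n(0)\bigr)+n(0)=n(r)
\]
for $r\geq 1$, one can even drop the constant $C$ and the additive $n(0)$, though this does not change anything downstream. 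Your iterated-logarithm step is also sound: the first application of $\log_{+}$ turns the multiplicative bound into an additive $O(1)$, and because $\log_{+}(x+c)=\log_{+}x+o(1)$ as $x\to\infty$ for fixed $c$, every subsequent application reduces the error to $o(1)$. Dividing by $\log r$ and using $\log(er)/\log r\to 1$ then absorbs both the $o(1)$ error and the shift from $r$ to $er$, giving exactly the claimed $\limsup$ inequality. Nothing is missing.
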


For $k\in\mathbb{N}$, let $N_{k}$ be a collection of disjoint compact sets in $\mathbb{R}^{n}$ such that\\
(a) every element of $N_{k}$ contains an element of $N_{k+1}$,\\
(b) every element of $N_{k+1}$ is contained in an element of $N_{k}$.

Let $\overline{N_{k}}=\bigcup_{A\in N_{k}}A$  and $N=\bigcap_{k=1}^{\infty}\overline{N_{k}}.$

McMullen \cite{McMullen1987} gave a lower bound for the Hausdorff dimension of a set $N$ constructed this way. Peter \cite[p.33]{Peter2008} used McMullen's method to obtain a sufficient condition for the set $N$ to have infinite Hausdorff measure with respect to some gauge function $h.$ We mention that they both worked with the Euclidean distance but the following lemma follows directly from the original one.

For measurable subsets $X, Y$ of the plane (or sphere) we define the Euclidean and the spherical  density of $X$ in $Y$ by
$$\dens(X, Y)=\frac{\area(X\cap Y)}{\area(Y)}~~~\text{and}~~~\dens_{\chi}(X,Y)=\frac{\area_{\chi}(X\cap Y)}{\area_{\chi}(Y)}.$$
Note that 
\begin{equation}\label{denschor}
\left(\frac{1+R^{2}}{1+S^{2}}\right)^{2}\dens(X, Y)\leq \dens_{\chi} (X,Y) \leq \left(\frac{1+S^{2}}{1+R^{2}}\right)^{2}\dens(X,Y),
\end{equation}
if $Y$ is a subset of the annulus $\{z\in \mathbb{C}\colon R<|z|<S\}.$

With this terminology Peter's result takes the following form.
\begin{lemma}\label{mass}
	For $k\in \mathbb{N,}$ let $N_{k},$ $N$ be as above. Suppose that $\Delta_{k}>0,$ $d_{k}>0,$ $d_{k}\rightarrow 0,$ such that if $B\in N_{k},$ then
	$$\dens_{\chi}(\overline{N_{k+1}},B)\geq \Delta_{k}~~~and~~~\diam_{\chi} B\leq d_{k}.$$
	Set $h(t)=t^{2}g(t)$ for $t>0,$ where $g(t)$ is a decreasing continuous function such that $h(t)$ is increasing and satisfies $\lim_{t\rightarrow0}t^{2}g(t)=0.$
Then we have $\mu_{h}(N)=\infty$ if
	$$\lim_{k\rightarrow \infty} g(d_{k})\prod_{j=1}^{k}\Delta_{j}=\infty.$$
\end{lemma}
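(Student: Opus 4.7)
The plan is to carry out the standard McMullen--Peter mass-distribution construction directly in the spherical metric, and then transfer to the Euclidean diameter at the very end using $\diam_\chi\le\diam$ together with the monotonicity of $h$.

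First I would construct a Borel probability measure $\mu$ supported on $N$. For $B\in N_1$ set $m_1(B)=\area_\chi(B)/\area_\chi(\overline{N_1})$, and extend inductively: if $B\in N_{k+1}$ has parent $B'\in N_k$, define
$$m_{k+1}(B)=m_k(B')\,\frac{\area_\chi(B)}{\area_\chi(\overline{N_{k+1}}\cap B')}.$$
By construction $\sum_{B\in N_{k+1},\,B\subset B'}m_{k+1}(B)=m_k(B')$, so the $m_k$ are consistent and extend to a Borel probability measure $\mu$ on $N$. Iterating the recursion and using the density hypothesis $\area_\chi(\overline{N_{k+1}}\cap B')\geq\Delta_k\,\area_\chi(B')$ at every level yields the key mass bound
$$m_k(B)\leq\frac{\area_\chi(B)}{\area_\chi(\overline{N_1})\prod_{j=1}^{k-1}\Delta_j}\qquad\text{for every }B\in N_k.$$

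Next I would control $\mu(A)$ for an arbitrary set $A$ of small spherical diameter $r$. Choosing the largest $k$ with $d_k\geq r$, each $B\in N_{k+1}$ meeting $A$ lies inside a spherical disc of radius at most $r+2d_{k+1}\leq 3r$ around any fixed point of $A$; disjointness of the elements of $N_{k+1}$ then gives
$$\sum_{B\in N_{k+1},\,B\cap A\neq\emptyset}\area_\chi(B)\leq C r^2.$$
Combined with the mass bound this produces
$$\mu(A)\leq\sum_{B\in N_{k+1},\,B\cap A\neq\emptyset} m_{k+1}(B)\leq\frac{Cr^2}{\prod_{j=1}^{k}\Delta_j}.$$
Since $g$ is decreasing and $r\leq d_k$, one has $r^2=h(r)/g(r)\leq h(r)/g(d_k)$, hence
$$\mu(A)\leq\frac{C\,h(r)}{g(d_k)\prod_{j=1}^{k}\Delta_j}.$$

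To finish, for any $\delta$-cover $\{A_j\}$ of $N$ each $A_j$ satisfies $\diam_\chi A_j\leq\diam A_j\leq\delta$; applying the previous estimate at the scale $k(\delta)$ determined by $d_{k(\delta)+1}<\delta\leq d_{k(\delta)}$, and using the monotonicity $h(\diam_\chi A_j)\leq h(\diam A_j)$, I obtain
$$1=\mu(N)\leq\sum_j\mu(A_j)\leq\frac{C}{g(d_{k(\delta)})\prod_{j=1}^{k(\delta)}\Delta_j}\sum_j h(\diam A_j).$$
Since $k(\delta)\to\infty$ as $\delta\to 0$, the hypothesis $g(d_k)\prod_{j=1}^{k}\Delta_j\to\infty$ forces $\sum_j h(\diam A_j)\to\infty$ uniformly in the cover, giving $\mu_h(N)=\infty$. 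The step requiring the most care is the geometric estimate $\sum_{B\cap A\neq\emptyset}\area_\chi(B)\leq Cr^2$ on the sphere: one must verify that the (possibly very many and very small) elements of $N_{k+1}$ meeting $A$ all lie inside a single spherical disc of radius $O(r)$, so that their disjointness yields the area bound automatically; the remainder of the argument is essentially bookkeeping and the $\diam_\chi$-to-$\diam$ conversion at the end.
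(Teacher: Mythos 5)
Your proposal reconstructs the McMullen--Peter mass-distribution argument that the paper itself does not reproduce but cites (Peter's thesis, adapting McMullen), and the core of the proof is sound: the probability measure $\mu$ built level-by-level on the nesting, the mass bound $m_k(B)\le \area_\chi(B)/\bigl(\area_\chi(\overline{N_1})\prod_{j=1}^{k-1}\Delta_j\bigr)$, the disjointness-plus-diameter estimate giving $\mu(A)\le C\,h(r)/\bigl(g(d_k)\prod_{j\le k}\Delta_j\bigr)$ with $r=\diam_\chi A$ and $k$ the largest index with $d_k\ge r$, and finally the mass distribution principle combined with $h(\diam_\chi A_j)\le h(\diam A_j)$. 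This is exactly the approach the paper implicitly relies on.

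One small imprecision in the last step: you apply the estimate at a \emph{single} scale $k(\delta)$ to all the $A_j$, writing $\mu(A_j)\le C\,h(\diam A_j)/\bigl(g(d_{k(\delta)})\prod_{i\le k(\delta)}\Delta_i\bigr)$. But your estimate was derived with $k=k_j$ depending on $r_j=\diam_\chi A_j$, which can be strictly larger than $k(\delta)$ when $A_j$ is much smaller than $\delta$; re-deriving it at the fixed level $k(\delta)$ only gives $\mu(A_j)\le C\delta^2/\prod_{i\le k(\delta)}\Delta_i$, which is weaker. As written, your display is valid only if $g(d_k)\prod_{i\le k}\Delta_i$ is non-decreasing in $k$, which is not part of the hypothesis. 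The fix is routine: keep the per-$j$ scale $k_j$, observe $k_j\ge k(\delta)$ (assuming $d_k$ non-increasing, which holds in the application), and use that the hypothesis $g(d_k)\prod_{i\le k}\Delta_i\to\infty$ gives
$$\sup_{k\ge k(\delta)}\frac{1}{g(d_k)\prod_{i\le k}\Delta_i}\longrightarrow 0\quad\text{as }\delta\to 0,$$
so $1=\mu(N)\le \varepsilon(\delta)\sum_j h(\diam A_j)$ with $\varepsilon(\delta)\to 0$, and the conclusion follows.
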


\section{Proof of theorem \ref{thm0}}\label{proof1}
We follow the method used in \cite[Section 3]{BergweilerKotus2012} with some modifications.

With the assumption and Lemma \ref{infinitelypoles}, $f$ has infinitely many poles, say denoted by $a_{j}$ and ordered such that  $|a_{j}| \leq |a_{j+1}|$ for all $j\in \mathbb{N}.$ Let $m_{j}$ be the multiplicity of $a_{j}.$ Thus for some $b_{j}\in\mathbb{C}\backslash \left\{0\right\},$
$$f(z)\sim \left(\frac{b_{j}}{z-a_{j}}\right)^{m_{j}}~~~\text{as}~z\rightarrow a_{j}.$$
We may assume that $|a_{j}|\geq 1$ for all $j.$ Choose $R_{0}> 1$ such that sing$(f^{-1})\subset D(0, R_{0})$ and $|f(0)|< R_{0}.$

If $R\geq R_{0,}$ then all the components of $f^{-1}(B(R))$ are bounded and simply-connected and each component contains exactly one pole by Lemma \ref{component}. Let $U_{j}$ be the component containing $a_{j}.$ By the Riemann mapping theorem we may choose a conformal map
$$\phi_{j}\colon U_{j}\rightarrow D(0,R^{-1/m_{j}})$$
satisfying the normalization $\phi_{j}(a_{j})=0$ and $\phi_{j}'(a_{j})=1/b_{j}$, see \cite{BergweilerKotus2012} for the details.

Denote the inverse function of $\phi_{j}$ by $\psi_{j}.$ Since $\psi_{j}(0)=a_{j}$ and $\psi'_{j}(0)=b_{j}$ we can deduce from \eqref{image} that
\begin{equation}\label{subdisk}
U_{j}=\psi_{j}\left(D(0,R^{-1/m_{j}})\right)\supset D \left(a_{j},\frac{1}{4}|b_{j}|R^{-1/m_{j}}\right) \supset D \left(a_{j},\frac{1}{4R}|b_{j}|\right).
\end{equation}
Since $|f(0)|<R$ we have $0\notin U_{j}.$ Then \eqref{subdisk} implies that
$$\frac{1}{4R}|b_{j}| \leq |a_{j}|$$
for all $R\geq R_{0}.$ Hence
\begin{equation}\label{blessthan4a}
|b_{j}| \leq 4R_{0}|a_{j}|.
\end{equation}
Note that $\psi_{j}$ actually extends to a map univalent in $D(0,R_{0}^{-1/m_{j}}).$  Choosing $R\geq 2^{M}R_{0}$ we can apply \eqref{square} with
$$\lambda=(R/R_{0})^{-1/m_{j}}=(R_{0}/R)^{1/m_{j}}\leq \frac{1}{2}$$
and obtain
\begin{equation}\label{supdisk}
U_{j}\subset D\left(a_{j}, 2|b_{j}|R^{-1/M}\right),
\end{equation}
provided $j$ is so large that $m_{j}\leq M.$ With \eqref{subdisk} and \eqref{supdisk} we see that
$$D\left(a_{j},\frac{1}{4R}|b_{j}|\right) \subset U_{j} \subset D\left(a_{j},2R^{-1/M}|b_{j}|\right)$$
for large $j.$ Combining \eqref{blessthan4a} and \eqref{supdisk} and choosing $R\geq (16R_{0})^{M}$ we have
\begin{equation}\label{supradiusa}
U_{j}\subset D\left(a_{j}, \frac{1}{2}|a_{j}|\right) \subset D\left(0,\frac{3}{2}|a_{j}|\right).
\end{equation}
Let $n(r)$ denote the number of $a_{j}$ contained in the closed disc $\overline{D(0,r)}.$ Since the $U_{j}$ are pairwise disjoint we see with \eqref{subdisk} and \eqref{supradiusa} that
\begin{equation}\label{supofsumbj}
\sum_{j=1}^{n(r)}|b_{j}|^{2} \leq 36R^{2}r^{2},
\end{equation}
by comparing the areas of the domains (see \cite[p.5374]{BergweilerKotus2012}).

Let $D\subset B(R)\backslash\left\{\infty\right\}$ be a simply connected domain. Then any branch of the inverse of $f$ defined in a subdomain of $D$ can be continued analytically to $D.$ Let $g_{j}$ be a branch of $f^{-1}$ that maps $D$ to $U_{j}.$ Thus
\begin{equation}\label{g}
g_{j}(z)=\psi_{j}\left(\frac{1}{z^{\frac{1}{m_{j}}}}\right),
\end{equation}
for some branch of the $m_{j}$-th root.
Since we assumed that $R\geq 2^{M}R_{0}$ we deduce from \eqref{square} with $\lambda=1/2$ that
\begin{equation}\label{glambda}
|g'_{j}(z)|\leq \frac{12|b_{j}|}{m_{j}|z|^{1+\frac{1}{m_{j}}}}\leq \frac{12|b_{j}|}{|z|^{1+\frac{1}{M}}}
\end{equation}
for $z\in D\subset B(R)\backslash\{\infty\},$ provided $j$ is so large that $m_{j}\leq M.$ 
Moreover, if $U_{k}\subset B(R)$ with \eqref{supdisk} and \eqref{supradiusa} we have
\begin{align*}
\diam g_{j}(U_{k}) &\leq \sup^{}_{z\in U_{k}}|g'_{j}(z)|\diam U_{k}
\leq 
2^{1+\frac{1}{M}}12\frac{4}{R^{\frac{1}{M}}}\frac{|b_{j}|}{|a_{j}|^{1+\frac{1}{M}}}|b_{k}|.
\end{align*}
By induction if $U_{j_{1}},~U_{j_{2}},~...,~U_{j_{l}}\subset ~B(R),$ then with \eqref{supdisk} and transfering to the spherical distance
we have (see \cite[equation (3.10)]{BergweilerKotus2012})
\begin{equation}\label{diamchi}
\diam_{\chi}((g_{j_{1}} \circ g_{j_{2}} \circ ... \circ g_{j_{l-1}})(U_{j_{l}}))
\leq (2^{1+\frac{1}{M}}12)^{l-1}\frac{32}{R^{\frac{1}{M}}}\prod_{k=1}^{l}\frac{|b_{j_{k}}|}{|a_{j_{k}}|^{1+\frac{1}{M}}}.
\end{equation}
\\
Before we continue we shall prove the following result. This corresponds to \cite[lemma 3.1]{BergweilerKotus2012}, dealing with gauge functions of the form $h(t)=t^{\alpha}.$ These gauge functions are estimated using H\"older's inequality. Instead, here we consider  the gauge functions defined by \eqref{h} and use the results of section \ref{gauge} to estimate them.

\begin{lemma}\label{key}
Let $h$ be defined as in \eqref{h}. If $\gamma < 2/(M\rho_{})$
then 
\begin{equation}\label{converge}
\sum_{j=1}^{\infty}h\left(\frac{|b_{j}|}{|a_{j}|^{1+\frac{1}{M}}}\right)<\infty.
\end{equation}
\end{lemma}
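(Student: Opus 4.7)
The plan is to mimic the Bergweiler--Kotus argument \cite[Lemma 3.1]{BergweilerKotus2012} but replace H\"older's inequality, which is well suited to pure-power gauges, by Jensen's inequality applied to the concave function $G(t)=h(\sqrt{t})$ supplied by Lemma \ref{concave}. Averaging $t_j^2$ in this way lets us deal with shells in which $|b_j|$ fluctuates wildly, which the pointwise bound $\log^n(1/t_j)\le \log^n|a_j|^{1+1/M}\cdot\log^n(1/|b_j|)$ (via Lemma \ref{logpreaddtive}) does not control by itself.

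First I would arrange the poles in dyadic shells $A_k=\{j:2^k\le|a_j|<2^{k+1}\}$ and pick $k_0$ so large that $t_j\le\delta_n$ for every $j\in A_k$ with $k\ge k_0$; this is possible because $t_j\le 4R_0|a_j|^{-1/M}$ by \eqref{blessthan4a}, and the finitely many small-index terms contribute a bounded amount. On each non-empty shell with $k\ge k_0$, Lemma \ref{Jensen} applied to the points $t_j^2\in(0,\delta_n^2]$ with equal weights $1/|A_k|$, together with the monotonicity and concavity of $G$, yields
\begin{equation*}
\sum_{j\in A_k}h(t_j)\;=\;\sum_{j\in A_k}G(t_j^2)\;\le\;|A_k|\,G\!\left(\frac{1}{|A_k|}\sum_{j\in A_k}t_j^2\right).
\end{equation*}
Inequality \eqref{supofsumbj} then gives $\sum_{j\in A_k}t_j^2\le 2^{-k(2+2/M)}\sum_{j\in A_k}|b_j|^2\le c\,2^{-2k/M}$, so the average is at most $c/(|A_k|\,2^{2k/M})$. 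Since $G$ is increasing, substituting produces
\begin{equation*}
|A_k|\,G\!\left(\frac{1}{|A_k|}\sum_{j\in A_k}t_j^2\right)\;\le\;\frac{c}{2^{2k/M}}\Bigl(\log^n\!\sqrt{|A_k|\,2^{2k/M}/c}\Bigr)^{\!\gamma}.
\end{equation*}

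The crux is bounding the iterated logarithm. Lemma \ref{nrorder} gives, for any $\epsilon>0$ and all sufficiently large $k$, $|A_k|\le n(2^{k+1})\le\exp^n(2^{(k+1)(\rho+\epsilon)})$, i.e.\ $\log^n|A_k|\le 2^{(k+1)(\rho+\epsilon)}$. Applying Lemma \ref{logpreaddtive} to the two factors $|A_k|^{1/2}$ and $2^{k/M}$ (both exceeding $\exp^n 2$ for $k$ large), one obtains
\begin{equation*}
\log^n\!\sqrt{|A_k|\,2^{2k/M}/c}\;\le\;(\log^n|A_k|)(\log^n 2^{k/M})\;\le\;C\cdot 2^{(k+1)(\rho+\epsilon)}\,P(k),
\end{equation*}
where $P(k)$ is polynomial in $k$ when $n=1$ and polylogarithmic when $n\ge 2$. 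Summing over $k\ge k_0$ gives
\begin{equation*}
\sum_{j}h(t_j)\;\le\;C'\sum_{k\ge k_0}2^{k[(\rho+\epsilon)\gamma-2/M]}P(k)^{\gamma}+O(1),
\end{equation*}
which converges as soon as $(\rho+\epsilon)\gamma<2/M$; the hypothesis $\gamma<2/(M\rho)$ lets us fix such an $\epsilon>0$, yielding \eqref{converge}.

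The main technical obstacle is the logarithm estimate: one must verify that the threshold conditions of Lemma \ref{logpreaddtive} and of the concavity range $(0,\delta_n^2]$ from Lemma \ref{concave} hold simultaneously for every factor appearing after the split, and that passing from $|A_k|$ to $|A_k|^{1/2}$ or from $2^{2k/M}$ to $2^{k/M}$ inside $\log^n$ produces no unexpected blow-up. The $\rho_n$-bound from Lemma \ref{nrorder} is exactly what balances this, and the resulting exponent $(\rho+\epsilon)\gamma-2/M$ is precisely why $2/(M\rho)$ appears as the threshold for $\gamma$.
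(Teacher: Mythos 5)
Your argument is essentially the paper's: same dyadic shells, same passage to $G(t)=h(\sqrt t)$ and Jensen's inequality via Lemma~\ref{Jensen} together with Lemma~\ref{concave}, same use of \eqref{supofsumbj} for $\sum_{j} t_j^2$ on each shell and of Lemma~\ref{nrorder} to control the cardinality. The only genuine divergence is in the final estimate of the iterated logarithm. You bound $\log^n\sqrt{|A_k|\,2^{2k/M}/c}$ multiplicatively via Lemma~\ref{logpreaddtive}, producing an extra polylogarithmic factor $P(k)^\gamma$ that is then absorbed by the geometric decay since $(\rho+\varepsilon)\gamma-2/M<0$ strictly. The paper instead unwinds $\log^{n}$ directly: one application of $\log$ leaves $\tfrac12\exp^{n-1}2^{(\rho+\varepsilon)(l+1)}-\log K+\tfrac{2l}{M}\log 2$, which is dominated by the exponential term, and telescoping down yields the clean bound $2^{(\rho+\varepsilon)(l+1)+1}$ with no extra polynomial factor. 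Both converge; yours reuses Lemma~\ref{logpreaddtive}, the paper's is a short ad hoc computation.

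One point you flag but do not close: applying Lemma~\ref{logpreaddtive} to the factors $|A_k|^{1/2}$ and $2^{k/M}/\sqrt c$ requires both to exceed $\exp^{n}2$. The second is fine for $k$ large, but $|A_k|$ can be $0$ or $1$ for arbitrarily large $k$, so the claim that $|A_k|^{1/2}\ge\exp^{n}2$ ``for $k$ large'' is not justified as stated. The fix is cheap: either replace $|A_k|$ by $\max\{|A_k|,\exp^{2n}2\}$ before splitting (the constant washes out after taking $\log^{n}$ and raising to the power $\gamma$), or treat shells with bounded $|A_k|$ separately, where $\log^n\sqrt{|A_k|\,2^{2k/M}/c}=O(\log^n 2^{k/M})$ is already polylogarithmic and the contribution $O\bigl(2^{-2k/M}\cdot\mathrm{polylog}(k)^\gamma\bigr)$ is summable without any appeal to Lemma~\ref{nrorder}. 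The paper sidesteps the issue by the Case~1/Case~2 split on whether $K2^{-2l/M}/\card P_l<\delta_n^2$, observing that Case~1 forces $P_l=\emptyset$ for $l$ large; you should incorporate this or an equivalent device.
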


\begin{proof}[Proof.]
For $l\geq0,$ we put$$P_{l}=\left\{ j\in \mathbb{N}\colon n(2^{l})\leq j< n(2^{l+1})\right\}=\left\{j\in\mathbb{N}\colon 2^{l}\leq |a_{j}| < 2^{l+1}\right\}.$$
Denote by $\card P_{l}$ the cardinality of $P_{l}$ and put
$$c_{j}=\left(\frac{|b_{j}|}{|a_{j}|^{L}}\right)^{2},$$
where $L=1+1/M.$
With \eqref{supofsumbj} we obtain
\begin{align*}
\sum_{j\in P_{l}} c_{j}&
= \sum_{j\in P_{l}}\frac{\left|b_{j}\right|^{2}}{\left|a_{j}\right|^{2L}}\\
&\leq 2^{-2lL}\sum_{j\in P_{l}}\left|b_{j}\right|^{2}\\
&\leq 2^{-2lL}\sum_{j=1}^{n(2^{l+1})}\left|b_{j}\right|^{2}\\
&\leq 2^{-2lL}36R^{2}~2^{2(l+1)}\\
\end{align*}
Thus  
\begin{equation}\label{cj}
\sum_{j\in P_{l}} c_{j}\leq K2^{-\frac{2l}{M}},
\end{equation}
where $K=144R^{2}.$
Set
$$S_{l}=\sum_{j\in P_{l}}h\left(\frac{|b_{j}|}{|a_{j}|^{1+\frac{1}{M}}}\right)$$ and $$G(t)= h(\sqrt{t}).$$ 
Then
\begin{equation}\label{sg}
S_{l}=\sum_{j\in P_{l}}G(c_{j}).
\end{equation}
\\
Let $\delta_{n}=1/\exp^{n}(\gamma)$ as in Lemma \ref{concave}.\\
\textbf{Case 1.} Suppose that 
$$\frac{K2^{-\frac{2l}{M}}}{\card P_{l}}\geq \delta_{n}^{2}.$$
Then 
$$\card P_{l}\leq \delta_{n}^{-2}K2^{-\frac{2l}{M}}<1,\qquad\text{as}\quad l\rightarrow \infty.$$
Thus $P_{l}=\emptyset$ for large $l.$ For such $l$ we have $S_{l}=0.$\\
\textbf{Case 2.} Suppose that $$\frac{K2^{-\frac{2l}{M}}}{\card P_{l}}<\delta_{n}^{2}.$$
Then from \eqref{cj} we have 
$$\frac{\sum_{j\in P_{l}}c_{j}}{\card P_{l}}\leq \frac{K2^{-\frac{2l}{M}}}{\card P_{l}}.$$
Hence by Lemma \ref{concave}, 
\begin{equation}\label{cg}
G\left(\frac{\sum_{j\in P_{l}}c_{j}}{\card P_{l}}\right) \leq G\left(\frac{K2^{-\frac{2l}{M}}}{\card P_{l}}\right).
\end{equation}
Applying Lemma \ref{Jensen} to $G(t)$ with $r_{j}= 1/ \card P_{l}$ and $x_{j}=c_{j}$ for $j\in P_{l}$ we obtain 
$$G\left(\frac{\sum_{j\in P_{l}}{} c_{j}}{\card~P_{l}}\right) \geq \frac{\sum_{j\in P_{l}}G\left(c_{j}\right)}{\card~P_{l}}.$$
This together with \eqref{cj}, \eqref{sg} and \eqref{cg} give,
\begin{align}
S_{l}&\leq (\card~P_{l})~
G\left(\frac{\sum_{j\in P_{l}} c_{j}}{\card~P_{l}}\right)\nonumber
\\ &
\leq (\card~P_{l})~G\left(\frac{K2^{-\frac{2l}{M}}}{\card~P_{l}}\right)\nonumber \\&
= (\card~P_{l})\frac{K2^{-\frac{2l}{M}}}{\card~P_{l}}\left(\log_{}^{n}\frac{1}{\sqrt{\frac{K2^{-\frac{2l}{M}}}{\card~P_{l}}}}\right)^{\gamma}\nonumber
\\&
=K2^{-2l/M}\left(\log_{}^{n}\frac{\sqrt{\card~P_{l}}}{\sqrt{K2^{-\frac{2l}{M}}}}\right)^{\gamma}.\label{sumg}
\end{align}
\\
Lemma \ref{nrorder} implies for $\varepsilon>0$ that
\begin{equation}\label{cardpl}
\card P_{l}\leq n(2^{l+1})\leq \exp^{n}\left((2^{l+1})^{\rho_{}+\varepsilon}\right),
\end{equation}
for large $l.$  
Then \eqref{sumg} and \eqref{cardpl} give,
\begin{align*}
S_{l}&\leq K2^{-\frac{2l}{M}}\left(\log^{n}\frac{\sqrt{\exp^{n}2^{(\rho_{}+\varepsilon)(l+1)}}}{K2^{-\frac{2l}{M}}}\right)^{\gamma}\\
&=K2^{-\frac{2l}{M}}\left(\log^{n-1}\left(\frac{1}{2}\exp^{n-1}2^{(\rho_{}+\varepsilon)(l+1)}-\log K+ \frac{2l}{M}\log 2\right)\right)^{\gamma}\\
&\leq K2^{-\frac{2l}{M}}\left(\log^{n-1}\left(\exp^{n-1}2^{(\rho_{}+\varepsilon)(l+1)}\right)\right)^{\gamma}\\
&\leq K2^{-\frac{2l}{M}}2^{((\rho_{}+\varepsilon)(l+1)+1)\gamma}\\
&=K2^{(\rho_{}+\varepsilon+1)\gamma}2^{-l(\frac{2}{M}-(\rho_{}+\varepsilon)\gamma)}
\end{align*}
for $n\geq 2$ and $l$ large.
If $\gamma<2/(M(\rho+\varepsilon)),$ then
$$\frac{2}{M}-(\rho_{}+\varepsilon)\gamma>0,$$
which implies the series $\sum_{l=0}^{\infty} S_{l}$ converges. The conclusion follows as $\varepsilon\rightarrow 0.$
\end{proof}

We continue the proof by denoting $E_{l}$ the collections of all components $V$ of $f^{-l}(B(R))$ for which $f^{k}(V)\subset U_{j_{k+1}}\subset B(R)$ for $k=0, 1, ..., l-1.$
For $V\in E_{l},$ there exist $j_{1}, j_{2},\cdots, j_{l}\geq n(R)$ such that
$$V=(g_{j_{1}}\circ g_{j_{2}}\circ \cdots \circ g_{j_{l-1}})(U_{j_{l}}).$$ 
From \eqref{diamchi} we have
\begin{equation}\label{v}
\diam_{\chi}(V)\leq (2^{1+\frac{1}{M}}12)^{l-1}\frac{32}{R^{1/M}}\prod_{k=1}^{l}\frac{|b_{j_{k}}|}{|a_{j_{k}}|^{1+\frac{1}{M}}}.
\end{equation}
It is easy to see from \eqref{blessthan4a} that for $R$ large,
$$2^{1+\frac{1}{M}}12\frac{|b_{j_{k}}|}{|a_{j_{k}}|^{1+\frac{1}{M}}}\leq \frac{1}{\exp^{n}2}$$
and
$$\frac{32}{R^{\frac{1}{M}}}\leq 2^{1+\frac{1}{M}}12.$$
Since there are $m_{j_{k}}$ branches of $f^{-1}$ mapping $U_{j_{k+1}}$ into $U_{j_{k}}$ for $k=1, 2, ..., l-1,$ we conclude that there are
$$\prod_{k=1}^{l-1}m_{j_{k}} \leq M^{l-1}$$
sets of diameters bounded as in \eqref{diamchi} which cover 
all those components $V$ of  
$f^{-l}(B(R))$ for which $f^{k}(V)\subset U_{j_{k+1}}\subset B(R)$ for $k=0, 1, ..., l-1.$

Now we may apply Lemma \ref{productive}, which together with \eqref{v} gives,
\begin{align*}
\sum_{V\in E_{l}}^{}h(\diam_{\chi}(V))&\leq M^{l-1}\sum_{j_{1}=n(R)}^{\infty}\ldots\sum_{j_{l}=n(R)}^{\infty} h\left((2^{1+\frac{1}{M}}12)^{l-1}\frac{32}{R^{\frac{1}{M}}}\prod_{k=1}^{l}\frac{|b_{j_{k}}|}{|a_{j_{k}}|^{1+\frac{1}{M}}}\right)
\\&
\leq M^{l-1}\sum_{j_{1}=n(R)}^{\infty}\ldots\sum_{j_{l}=n(R)}^{\infty} \prod_{k=1}^{l}h\left(2^{1+\frac{1}{M}}12\frac{|b_{j_{k}}|}{|a_{j_{k}}|^{1+\frac{1}{M}}}\right)
\\&
= \frac{1}{M}\left(M\sum_{j=n(R)}^{\infty} h\left(2^{1+\frac{1}{M}}12\frac{|b_{j_{}}|}{|a_{j_{}}|^{1+\frac{1}{M}}}\right)\right)^{l}
\end{align*}
for $R$ large enough.

We can get from \eqref{2v} and Lemma \ref{key} that if $\gamma<2/(M\rho),$
\begin{align*}
M\sum_{j=n(R)}^{\infty}h\left(2^{1+\frac{1}{M}}12\frac{|b_{j_{}}|}{|a_{j_{}}|^{1+\frac{1}{M}}}\right)\leq M\left(2^{1+\frac{1}{M}}12\right)^{2}\sum_{j=n(R)}^{\infty}h\left(\frac{|b_{j}|}{|a_{j}|^{1+\frac{1}{M}}}\right)<1,
\end{align*}
for $R$ large.
For such $R$ we find that
$$\lim_{l\rightarrow \infty}\sum_{V\in E_{l}}h(\diam_{\chi}(V))=0,\qquad \text{if} ~~~\gamma<\frac{2}{M\rho}.$$
We deduce from \eqref{supradiusa} that if $U_{j}\cap B(3R)\neq\emptyset,$ then $|a_{j}|>2R$ and $U_{j}\subset B(R).$ It follows that $E_{l}$ is a cover of the set
$$\left\{z\in B(3R)\colon f^{k}(z)\in B(3R)~\text{for}~1\leq k\leq l-1\right\}.$$
Therefore  
$$\mu_{h}(I_{3R}(f))=0,\qquad\quad\text{for}~\gamma<\frac{2}{M\rho}.$$
The conclusion follows since $I(f)=\bigcap_{R>0}I_{3R}(f)$.

\section{Construction of examples}\label{example}

Let $0<\rho<\infty$ and $n\in\mathbb{N}.$ We introduce the following function
\begin{align*}
q\colon [2^{1/ \rho}, \infty)\rightarrow [\exp^{n} 2, \infty),\qquad
q(r)=\exp^{n}(r^{\rho})
\end{align*}
and the inverse function 
\begin{equation}\label{pt}
p\colon [\exp^{n} 2, \infty)\rightarrow [2^{1/ \rho}, \infty),\qquad
p(t)=(\log^{n}t)^{1/ \rho}.
\end{equation}

We put $k_{0}=\lfloor\exp^{n} 2\rfloor+1.$ For $k\geq k_{0}$, $k\in\mathbb{N}$ set
\begin{equation}\label{nr}
n_{k}=\left\lfloor\frac{p(k)}{p'(k)}\right\rfloor.
\end{equation}
The next lemmas are giving some essential features of these functions, which help us to constuct the function in Theorem \ref{thmgz}.

\begin{lemma}\label{forgrowth}
\begin{equation}\label{tends0}
\frac{d}{dr}\left(\frac{q(r)}{q'(r)}\right)\rightarrow 0,\qquad
\frac{q(r)}{rq'(r)}\rightarrow 0,
\end{equation}
as $r \rightarrow \infty$.
\end{lemma}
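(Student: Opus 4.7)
The plan is to obtain a closed form for $q'(r)$ from the chain rule and then read off both limits from the resulting expression. I will introduce the iterated exponentials $L_0(r)=r^\rho$ and $L_k(r)=\exp(L_{k-1}(r))$ for $1\le k\le n$, so that $q=L_n$. The recursion $L_k'=L_kL_{k-1}'$ together with a straightforward induction on $k$ will give
$$q'(r)=\rho r^{\rho-1}\prod_{k=1}^{n}L_k(r),\qquad \frac{q(r)}{q'(r)}=\frac{1}{\rho r^{\rho-1}\prod_{k=1}^{n-1}L_k(r)},$$
with the empty product understood as $1$ when $n=1$.

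From this formula the second assertion is immediate: dividing by $r$ turns $r^{\rho-1}$ into $r^{\rho}\to\infty$, while each remaining factor satisfies $L_k(r)\ge 1$ for $r$ large, so $q(r)/(rq'(r))\to 0$.

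For the first assertion I will set $\phi(r)=\rho r^{\rho-1}\prod_{k=1}^{n-1}L_k(r)$, so that $(q/q')'=-\phi'/\phi^2$, and estimate $\phi'/\phi^2$. Using $L_k'/L_k=L_{k-1}'$, the logarithmic derivative takes the form
$$\frac{\phi'(r)}{\phi(r)}=\frac{\rho-1}{r}+\sum_{j=0}^{n-2}L_j'(r).$$
When $n=1$ the sum is empty and one obtains directly $\phi'/\phi^2=(\rho-1)/(\rho r^{\rho})\to 0$. When $n\ge 2$ the dominant term is $L_{n-2}'$; combining this with the identity $\phi=L_{n-1}\cdot L_{n-2}'$ (which follows immediately from the product definitions of $\phi$ and $L_{n-2}'$) then yields, for $r$ large,
$$\left|\frac{\phi'(r)}{\phi(r)^2}\right|\le \frac{2L_{n-2}'(r)}{L_{n-1}(r)\,L_{n-2}'(r)}=\frac{2}{L_{n-1}(r)}\longrightarrow 0.$$

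No substantial obstacle is expected: the lemma reduces to a direct computation once the iterated-exponential notation is set up. The only care needed is index bookkeeping inside the product and separating the boundary case $n=1$ from $n\ge 2$; in both the decay is ultimately driven by the fact that $L_{n-1}(r)=\exp(L_{n-2}(r))$ grows much faster than the lower-level derivative $L_{n-2}'(r)$, so the ratio in the dominant estimate collapses to the reciprocal of an iterated exponential.
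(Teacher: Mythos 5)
Your proposal is correct and follows essentially the same route as the paper: both start from the explicit chain-rule formula $q'(r)=\rho r^{\rho-1}\prod_{k=1}^{n}\exp^{k}(r^{\rho})$ and then compute or bound $\bigl(q/q'\bigr)'$. The only cosmetic difference is that you organize the derivative via the logarithmic-derivative identity $(1/\phi)'=-\phi'/\phi^{2}$ and isolate the dominant term $L_{n-2}'$, whereas the paper differentiates the reciprocal product term by term and observes each summand tends to zero; the underlying estimates are the same.
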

\begin{proof}

By differentiation,
\begin{equation}\label{qpri}
q'(r)=(\exp^{n}r^{\rho})(\exp^{n-1}r^{\rho})\cdots(\exp^{}r^{\rho})\rho r^{\rho-1}.
\end{equation}
Thus for $n=1,$
$$\frac{q(r)}{rq'(r)}=\frac{1}{\rho r^{\rho}}\rightarrow 0,$$ 
$$\frac{d}{dr}\frac{q(r)}{q'(r)}=\frac{1-\rho}{\rho r^{\rho}}\rightarrow 0,$$
as $r\rightarrow \infty.$\\
For $n\geq 2,$
\begin{equation}\label{qq}
\frac{q(r)}{q'(r)}=\frac{1}{(\exp^{n-1}r^{\rho})(\exp^{n-2}r^{\rho})(\exp r^{\rho})\rho r^{\rho-1}},
\end{equation}
and clearly
$$\frac{q(r)}{rq'(r)}\rightarrow 0, \qquad r\rightarrow \infty.$$
Differentiating \eqref{qq} we obtain
\begin{align*}
\frac{d}{dr}\left(\frac{q(r)}{q'(r)}\right)&=-\frac{1}{\exp^{n-1}r^{\rho}}-\frac{1}{(\exp^{n-1}r^{\rho})(\exp^{n-2}r^{\rho})}-\cdots\\
&-\frac{1}{(\exp^{n-1}r^{\rho})(\exp^{n-2}r^{\rho})\cdots(\exp r^{\rho})}\left(1+\frac{\rho-1}{\rho r^{\rho}}\right).
\end{align*}
It is easy to see that$$\frac{d}{dr}\left(\frac{q(r)}{q'(r)}\right)\rightarrow 0,\qquad r\rightarrow\infty.$$

\end{proof}

\begin{lemma}\label{pm} 
$$
p\left(t+\frac{1}{2}\right)-p(t)~~~\sim \frac{1}{2}p'(t),\qquad t\rightarrow\infty.
$$
\end{lemma}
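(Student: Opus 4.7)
My plan is to apply the mean value theorem, which produces $\xi_t \in (t, t + 1/2)$ with $p(t + 1/2) - p(t) = \tfrac{1}{2} p'(\xi_t)$, and then verify that $p'(\xi_t)/p'(t) \to 1$ as $t \to \infty$. Since $|\xi_t - t| \leq 1/2$, the identity
\[
\log \frac{p'(\xi_t)}{p'(t)} \;=\; \int_t^{\xi_t} \frac{p''(s)}{p'(s)}\, ds
\]
shows that it suffices to prove $p''(s)/p'(s) \to 0$ as $s \to \infty$, since then the integral is bounded by $\tfrac{1}{2}\sup_{s \geq t}|p''(s)/p'(s)|$, which goes to zero.

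To establish that asymptotic, I would pass to the variable $r = p(t)$, equivalently $t = q(r)$, using that $p = q^{-1}$. Differentiating $p'(t) = 1/q'(p(t))$ and simplifying gives the clean identity
\[
\frac{p''(t)}{p'(t)} \;=\; -\frac{q''(r)}{q'(r)^2}.
\]
A direct quotient-rule computation yields
\[
\frac{d}{dr}\!\left(\frac{q(r)}{q'(r)}\right) \;=\; 1 - \frac{q(r)\, q''(r)}{q'(r)^2},
\]
and the first conclusion of Lemma~\ref{forgrowth} says that the left-hand side tends to $0$. Hence $q(r)q''(r)/q'(r)^2 \to 1$, so that $q''(r)/q'(r)^2 \sim 1/q(r) = 1/t$, and therefore $p''(t)/p'(t) \sim -1/t \to 0$.

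Combining the two steps yields $p(t + 1/2) - p(t) = \tfrac{1}{2} p'(t)(1 + o(1))$, which is the stated asymptotic. The only genuinely non-trivial input is Lemma~\ref{forgrowth}, already in hand, so I do not anticipate a real obstacle; the main care is just the bookkeeping of the inverse-function derivatives. One could alternatively use the integral form $p(t+1/2) - p(t) = \int_t^{t+1/2} p'(s)\,ds$ and exploit that $p'(s)/p'(t) \to 1$ uniformly on this interval, but the mean-value formulation is the most economical.
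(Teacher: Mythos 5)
Your proof is correct, but takes a genuinely different route from the paper. The paper works entirely with the explicit formula for $p'(t)$ from \eqref{dpt}: it first notes that $p'$ is eventually nonincreasing, sandwiches $\int_t^{t+1/2}p'(s)\,ds$ between $\tfrac12 p'(t+\tfrac12)$ and $\tfrac12 p'(t)$, and then verifies $p'(t+\tfrac12)/p'(t)\to 1$ by a direct term-by-term computation with the iterated logarithms. You instead apply the mean value theorem to reduce to showing $p'(\xi_t)/p'(t)\to 1$, and then prove the stronger fact $p''/p'\to 0$ by passing to the conjugate variable $r=p(t)$ and invoking Lemma~\ref{forgrowth}. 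The identity $p''(t)/p'(t) = -q''(r)/q'(r)^2$ is correct (differentiate $p'(t)=1/q'(p(t))$), and combining $\tfrac{d}{dr}(q/q')\to 0$ with $q(r)=t$ gives $q''/q'^2 \sim 1/t$, so the conclusion follows. Your approach buys a reuse of Lemma~\ref{forgrowth} rather than a fresh computation, and the intermediate estimate $p''/p'\to 0$ is a slightly stronger and cleaner statement; the paper's approach is more elementary and self-contained, requiring only the formula for $p'$ and no inverse-function bookkeeping. The alternative you mention at the end (using $\int_t^{t+1/2}p'(s)\,ds$ and uniform convergence of $p'(s)/p'(t)$) is essentially what the paper actually does.
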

\begin{proof}
From \eqref{pt} we have
\begin{equation}\label{dpt}
p'(t)=\frac{1}{\rho_{}}\left(\log_{}^{n}t\right)^{\frac{1}{\rho_{}}}\frac{1}{t(\log_{}t)(\log_{}^{2}t)\cdots(\log^{n}_{}t)},
\end{equation}
from which we can deduce that there exists $t_{0}$ such that $p'(t)$ is nonincreasing on $(t_{0}, +\infty).$ Therefore 
\begin{equation}\label{pt1}
\int_{t}^{t+\frac{1}{2}}p'(s)ds\leq \frac{1}{2}\max_{t\leq s\leq t+\frac{1}{2}}p'(s)=\frac{1}{2} p'(t),
\end{equation}
and 
\begin{equation}\label{p2}
\int_{t}^{t+\frac{1}{2}}p'(s)ds\geq \frac{1}{2}\min_{t\leq s \leq t+\frac{1}{2}}p'(t)=\frac{1}{2} p'\left(t+\frac{1}{2}\right)
\end{equation}
for $t\geq t_{0}.$ Note that
$$\frac{p'(t+\frac{1}{2})}{p'(t)}=\left(\frac{\log_{}^{n}(t+\frac{1}{2})}{\log_{}^{n}t}\right)^{\frac{1}{\rho_{n}}}\frac{t(\log_{}t)\cdots(\log_{}^{n}t)}{(t+\frac{1}{2})\log_{}(t+\frac{1}{2})\cdots \log^{n}_{}(t+\frac{1}{2})}\rightarrow 1$$
as $t\rightarrow \infty.$ 
Together with \eqref{p2} and \eqref{pt1} we have
$$p\left(t+\frac{1}{2}\right)-p(t)=\int_{t}^{t+\frac{1}{2}}p'(s)ds\sim \frac{1}{2}p'(t),\qquad t\rightarrow \infty.$$
\end{proof}

\begin{lemma}\label{sumnk}
For  $l\in \mathbb{R}$ with $\lfloor l \rfloor \geq k_{0}+1,$
$$\sum_{k=k_{0}+1}^{\lfloor l \rfloor}n_{k}\sim \int_{k_{0}+1}^{l}\frac{p(t)}{p'(t)}dt$$
as $l\rightarrow \infty.$
\end{lemma}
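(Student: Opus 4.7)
The plan is to approximate $n_k$ by $p(k)/p'(k)$, recognize the resulting sum as a Riemann sum for the stated integral, and finally verify that the discretization errors are negligible compared to the integral itself. From $n_k = \lfloor p(k)/p'(k)\rfloor$ one has $n_k = p(k)/p'(k) + O(1)$, hence
\begin{equation*}
\sum_{k=k_{0}+1}^{\lfloor l \rfloor} n_{k} = \sum_{k=k_{0}+1}^{\lfloor l \rfloor} \frac{p(k)}{p'(k)} + O(l).
\end{equation*}

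Set $\phi(t) = p(t)/p'(t)$. Reading off \eqref{dpt} directly gives the explicit form $\phi(t) = \rho \, t \log t \log^{2} t \cdots \log^{n} t$, which is eventually strictly increasing (say on $[t_{0},\infty)$). For such a monotone integrand, the standard comparison
\begin{equation*}
\int_{k-1}^{k} \phi(t)\, dt \leq \phi(k) \leq \int_{k}^{k+1} \phi(t)\, dt
\end{equation*}
telescopes to
\begin{equation*}
\sum_{k=k_{0}+1}^{\lfloor l \rfloor} \phi(k) = \int_{k_{0}+1}^{l} \phi(t)\, dt + O(\phi(l)).
\end{equation*}

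The remaining task is to show that both error contributions $O(l)$ and $O(\phi(l))$ are negligible compared to $\int_{k_{0}+1}^{l}\phi(t)\,dt$. Writing $\phi(t) = t L(t)$ with $L(t) = \rho \log t \log^{2} t \cdots \log^{n} t$ slowly varying and tending to infinity, monotonicity yields
\begin{equation*}
\int_{k_{0}+1}^{l} \phi(t)\, dt \geq \int_{l/2}^{l} \phi(t)\, dt \geq \tfrac{l}{2}\,\phi(l/2),
\end{equation*}
so the integral grows at least like $l^{2}L(l/2)$, which dominates both $l$ and $\phi(l) = l L(l)$ since $L(l/2)/L(l) \to 1$ and $l \to \infty$. (Alternatively, one could derive $t p'(t)/p(t) \to 0$ by translating Lemma \ref{forgrowth} via the substitution $t = q(r)$, but the explicit expression for $\phi$ is more direct.) Combining the two steps gives the desired asymptotic equivalence.

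I do not expect a genuine obstacle; the statement is a routine discrete-to-continuous approximation for a monotone, regularly varying integrand. The only point requiring a brief check is that the integral genuinely outgrows $\phi(l)$, which follows immediately from the extra factor of $l$ gained by integrating a function of the form $tL(t)$.
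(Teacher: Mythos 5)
Your proof is correct and follows essentially the same route as the paper: approximate $n_{k}$ by $P(k)=p(k)/p'(k)$, compare the sum of the increasing function $P$ with the integral $I=\int_{k_{0}+1}^{l}P(t)\,dt$, and check that the boundary/rounding errors are $o(I)$. You are slightly more explicit about the $O(l)$ error from $n_{k}=\lfloor P(k)\rfloor$, and you bound $I$ from below via $I\geq\tfrac{l}{2}\phi(l/2)$ rather than the paper's $P(t)\geq t$ eventually, but these are cosmetic variations on the same argument.
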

\begin{proof}Denote 
$$P(t)=\frac{p(t)}{p'(t)}\qquad \text{and}\qquad\int_{k_{0}+1}^{l}P(t)dt= I.$$
With \eqref{pt} and \eqref{nr} we get
\begin{equation}\label{np}
n_{k}\sim P(k)=\rho_{}k\log_{}k\cdots \log_{}^{n} k,
\end{equation}
for $k\geq k_{0}.$ Since $P(t)$ is increasing with $t$ we have for $k_{0}\leq k \leq l,$
$$P(k)=P(k)\int_{k}^{k+1}dt\leq \int_{k}^{k+1}P(t)dt.$$
Therefore
\begin{equation}\label{leq}
\sum_{k=k_{0}+1}^{\lfloor l \rfloor}P(k)\leq \int_{k_{0}+1}^{l+1}P(t)dt= I+\int_{l}^{l+1}P(t)dt\leq I+P(l+1).
\end{equation}
Similarly we obtain

\begin{equation}\label{geq}
\sum_{k=k_{0}+1}^{\lfloor l \rfloor}P(k)\geq \int_{k_{0}}^{l-1}P(t)dt\geq I+c_{0}-P(l),
\end{equation}
where $c_{0}=\int_{k_{0}}^{k_{0}+1}P(t)dt.$
From \eqref{np} we may take $l\geq k_{1}>k_{0}+1$ so large that $P(t)\geq t$ for all $t>k_{1}.$
Thus
$$I\geq \int_{k_{1}}^{l}tdt\geq \frac{1}{2}l^{2}-\frac{1}{2}k_{1}^{2}.$$
Since
$$P(l)=\rho l\log (l)\cdots \log^{n} (l)=o(l^{2})\qquad\text{as} \quad l\rightarrow\infty, $$
we have
$$P(l)=o(I),\qquad P(l+1)=o(I)$$
as $l\rightarrow \infty.$
Together with \eqref{np}, \eqref{leq} and \eqref{geq} we have our conclusion.
\end{proof}

\begin{lemma}\label{ppc} For $k_{0}\leq k<l,$ $k\in \mathbb{N}$ and $l\in\mathbb{R}, $
\begin{equation}\label{fraction}
\left(\frac{p(l)}{p(k)}\right)^{n_{k}}\geq\exp ( c\min\{k, l-k\}),
\end{equation}
where $c=(\log 2)^{n+1}/2.$ 
\end{lemma}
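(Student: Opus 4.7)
\textbf{Proof proposal for Lemma \ref{ppc}.}
Taking logarithms, the claim is equivalent to
$$n_k \log\frac{p(l)}{p(k)} \;\geq\; c\min\{k,\,l-k\}.$$
From \eqref{pt} we have $\log p(t) = \rho^{-1}\log^{n+1} t$, and \eqref{dpt} gives $p(k)/p'(k) = \rho\, k\log k\cdots \log^n k$. The plan is first to observe that, for $k\geq k_0$ (enlarging $k_0$ if necessary so that $p(k)/p'(k)\geq 2$), the floor in \eqref{nr} costs at most a factor $2$:
$$n_k \;\geq\; \frac{p(k)}{p'(k)} - 1 \;\geq\; \frac{\rho}{2}\,k\log k\log^2 k\cdots\log^n k.$$
This reduces the whole lemma to producing a lower bound of order $(\log k\cdots\log^n k)^{-1}$, resp.\ $(l-k)/(k\log k\cdots\log^n k)$, on $\log^{n+1} l - \log^{n+1} k$, which matches exactly the denominator we have just produced.

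The central device is the iterated chain obtained by writing, for $i=0,1,\dots,n$,
$$D_i := \log^i l - \log^i k,\qquad b_i := \log^i k,\qquad D_{i+1} = \log\!\left(1+\frac{D_i}{b_i}\right),$$
and invoking the elementary inequality $\log(1+x)\geq x\log 2$ for $0\leq x\leq 1$ (this holds by concavity since both sides agree at $x=0$ and $x=1$). Whenever $D_i/b_i\leq 1$ this yields $D_{i+1}\geq (\log 2)\,D_i/b_i$. The choice $k\geq k_0=\lfloor\exp^n 2\rfloor+1$ is precisely what guarantees $b_i=\log^i k\geq 2$ for $1\leq i\leq n$, which is what will make the quantities $E_i$ introduced below satisfy $E_i/b_i\leq 1$ automatically.

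The argument then splits into two cases matching the $\min$ on the right-hand side. If $l\geq 2k$ we have $D_1=\log(l/k)\geq\log 2$, and we define comparison quantities $E_1=\log 2$, $E_{i+1}=(\log 2)E_i/b_i$; a monotonicity induction gives $D_i\geq E_i$, culminating in
$$\log^{n+1} l - \log^{n+1} k \;=\; D_{n+1}\;\geq\; E_{n+1}\;=\;\frac{(\log 2)^{n+1}}{\log k\,\log^2 k\cdots \log^n k}.$$
If instead $k<l<2k$, then $D_0=l-k<k=b_0$, so the very first application of $(\ast)$ gives $D_1\geq (\log 2)(l-k)/k$; iterating $n$ more times yields
$$D_{n+1}\;\geq\; \frac{(\log 2)^{n+1}(l-k)}{k\log k\cdots\log^n k}.$$
Multiplying either bound by $n_k/\rho$ using the estimate from the first paragraph, the factors $\log k\cdots\log^n k$ cancel cleanly and leave exactly $\tfrac{(\log 2)^{n+1}}{2}\,k$ or $\tfrac{(\log 2)^{n+1}}{2}(l-k)$, as required.

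The only delicate step is bookkeeping the comparison chain: one must verify at each stage that $E_i/b_i\leq 1$ so that $(\ast)$ applies. This is straightforward from $E_i\leq E_1\leq \log 2$ together with $b_i\geq 2$ for $1\leq i\leq n$, but it is the step where the hypothesis $k\geq k_0$ is actually used. Everything else is mechanical algebra.
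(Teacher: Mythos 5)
Your proposal is correct and rests on the same core ingredients as the paper's proof: the bound $n_k\geq\tfrac12 p(k)/p'(k)$ (valid once $p(k)/p'(k)\geq 2$, which the paper also implicitly needs), the iterated-logarithm expansion of $\log(p(l)/p(k))$, and the elementary inequality $\log(1+x)\geq x\log 2$ on $[0,1]$, with the initial dichotomy $l\geq 2k$ versus $l<2k$. What differs is the bookkeeping. The paper proves the key estimate (4.18) by a genuine induction on $n$, and at \emph{each} inductive step splits into a "far" case ($\log^{n+1}l>2\log^{n+1}k$, which terminates the recursion early with the bound $k(\log 2)^{n+2}$) and a "near" case (which peels off one log and invokes the inductive hypothesis). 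Your comparison-chain argument instead observes that the far/near dichotomy only ever needs to be resolved at the innermost level $i=0$: once you define the auxiliary sequences $E_i$ (resp.\ $F_i$) from there, the hypothesis $k\geq k_0$ forces $E_i\leq\log 2$ and $b_i\geq 2$, so $E_i/b_i\leq 1$ automatically at every stage and the single inequality $\log(1+x)\geq x\log 2$ pushes the bound through all $n$ remaining levels uniformly. This collapses the per-level case analysis into one case split plus a monotone comparison, which is a modest but real simplification of the paper's structure; both proofs, unrolled, end up establishing exactly the same inequality $D_{n+1}\geq(\log 2)^{n+1}\min\{k,l-k\}/\bigl(k\log k\cdots\log^n k\bigr)$, matched against the explicit formula for $p(k)/p'(k)$ to produce the stated constant $c=(\log 2)^{n+1}/2$.
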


\begin{proof}
With \eqref{pt}, \eqref{nr} and \eqref{np} we have
\begin{align}
\left(\frac{p(l)}{p(k)}\right)^{n_{k}}&=\exp\left(n_{k}\log \frac{p(l)}{p(k)}\right)\nonumber\\
&\geq \exp\left(\frac{1}{2}\frac{p(k)}{p'(k)}\log \frac{p(l)}{p(k)}\right)\nonumber\\
&=\exp\left(\frac{1}{2}\rho k(\log k)\cdots (\log^{n}k)\log \left(\frac{\log^{n} l}{\log^{n} k}\right)^{\frac{1}{\rho}}\right)\nonumber\\
&= \exp \left(\frac{1}{2}k(\log k)\cdots (\log^{n} k) \log \frac{\log^{n}l}{\log^{n}k}\right)\label{ppnk},
\end{align}
We claim that
\begin{equation}\label{log2n}
k(\log k)\cdots (\log^{n} k) \log \frac{\log^{n}l}{\log^{n}k}\geq (\log 2)^{n+1}\min \{k, l-k\},
\end{equation}
which is verified as follows by induction to $n.$ We first consider that $n=0.$\\
\textbf{Case 1.}  If $k<\frac{l}{2}$ then
\begin{equation}\label{n11}
k\log\frac{l}{k}> k \log 2.
\end{equation}
\textbf{Case 2.} If $\frac{l}{2}\leq k < l,$ then $(l-k)/k\leq 1$ and thus
\begin{align}
k\log\frac{l}{k}&=k\log(1+\frac{l-k}{k})\nonumber\\
&\geq k\frac{l-k}{k}\log 2\nonumber\\
&=(l-k)\log 2.\label{n12}
\end{align}
Together \eqref{n11} and \eqref{n12} give
\begin{equation}\label{log0}
k\log\frac{l}{k}\geq (\log 2)\min\{k, l-k\},
\end{equation}
which is \eqref{log2n} for $n=0.$
Suppose that \eqref{log2n} holds for some $n.$\\
\textbf{Case 1.} If $\log^{n+1}l>2\log^{n+1}k$ and since $k\geq k_{0}$ then
\begin{align*}
k(\log k)\cdots (\log^{n+1}k)\log \frac{\log^{n+1}l}{\log^{n+1}k}
&> k(\log k)\cdots(\log^{n+1}k)\log 2 \nonumber\\
&> k(\log2)^{n+2}.
\end{align*}
\textbf{Case 2.} If $\log^{n+1}l \leq 2\log^{n+1}k$ then 
\begin{align*}
&k(\log k)\cdots (\log^{n+1}k)\log \frac{\log^{n+1}l}{\log^{n+1}k}\nonumber\\
&=k(\log k)\cdots (\log^{n+1}k)\log (1+\frac{\log^{n+1}l-\log^{n+1}k}{\log^{n+1}k})\nonumber\\
&\geq k(\log k)\cdots(\log^{n+1}k)\frac{\log^{n+1}l-\log^{n+1}k}{\log^{n+1}k}\log 2\nonumber\\
&=k(\log k)\cdots(\log^{n}k)\log\frac{\log^{n}l}{\log^{n}k}\log 2\nonumber\\
&\geq (\log 2)^{n+1}\min\{k, l-k\}\log 2\nonumber\\
&=(\log 2)^{n+2}\min\{k, l-k\}.
\end{align*}
Therefore 
\begin{equation}\label{log2n1}
k(\log k)\cdots (\log^{n+1} k) \log \frac{\log^{n+1}l}{\log^{n+1}k}\geq (\log 2)^{n+2}\min \{k, l-k\}.
\end{equation} 
From \eqref{log0} and \eqref{log2n1} we see that \eqref{log2n} holds with $n$ replaced by $n+1.$ Together with \eqref{ppnk} this gives \eqref{fraction}, by taking $c=(\log 2)^{n+1}/2$. 

\end{proof}

\begin{lemma}\label{ppck} For $k\in \mathbb{N},$ $l\in\mathbb{R}$ with $k>l\geq k_{0},$
\begin{equation}\label{fractionk}
\left(\frac{p(k)}{p(l)}\right)^{n_{k}}\geq\exp ( c(k-l)),
\end{equation}
where $c=(\log 2)^{n+1}/2.$
\end{lemma}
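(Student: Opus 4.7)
The proof is a direct parallel of Lemma \ref{ppc}, with the roles of $k$ and $l$ interchanged to match the reversed hypothesis $k > l$. I would begin by writing
\begin{equation*}
\left(\frac{p(k)}{p(l)}\right)^{n_k} = \exp\!\left(n_k \log \frac{p(k)}{p(l)}\right),
\end{equation*}
and, exactly as in the proof of Lemma \ref{ppc}, invoke the lower bound $n_k \geq \tfrac{1}{2}\,p(k)/p'(k) = \tfrac{\rho}{2}\,k(\log k)(\log^2 k)\cdots(\log^n k)$ (valid for $k$ sufficiently large, by \eqref{nr} and \eqref{np}) together with $\log(p(k)/p(l)) = \tfrac{1}{\rho}\log(\log^n k/\log^n l)$ coming from \eqref{pt}. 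The lemma then reduces to the estimate
\begin{equation*}
k(\log k)(\log^2 k)\cdots(\log^n k)\,\log\frac{\log^n k}{\log^n l}\;\geq\;(\log 2)^{n+1}(k-l),
\end{equation*}
which is the mirror image of \eqref{log2n} with $k$ and $l$ swapped and the term $\min\{k,l-k\}$ replaced by the simpler $k-l$.

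I would prove this inequality by induction on $n$, following the pattern of Lemma \ref{ppc}. For the base case $n=0$ one needs $k\log(k/l)\geq(\log 2)(k-l)$. Split into $k>2l$ (use $\log(k/l)>\log 2$ and $k\geq k-l$, the latter from $l\geq 0$) and $l<k\leq 2l$ (apply the concavity estimate $\log(1+x)\geq x\log 2$ on $[0,1]$ with $x=(k-l)/l$, then use $k\geq l$). For the inductive step, assume the bound holds at level $n$. If $\log^{n+1}k>2\log^{n+1}l$, then $\log(\log^{n+1}k/\log^{n+1}l)>\log 2$, and since $k\geq k_0=\lfloor\exp^n 2\rfloor+1$ forces $\log^j k\geq\log 2$ for $1\leq j\leq n+1$, one obtains
\begin{equation*}
k(\log k)\cdots(\log^{n+1}k)\log 2\geq k(\log 2)^{n+2}\geq(k-l)(\log 2)^{n+2}.
\end{equation*}
If instead $\log^{n+1}k\leq 2\log^{n+1}l$, then $(\log^{n+1}k-\log^{n+1}l)/\log^{n+1}l\in[0,1]$; applying $\log(1+x)\geq x\log 2$ once more and then replacing $\log^{n+1}l$ in the denominator by the larger $\log^{n+1}k$ cancels one iterated log factor, leaving $k(\log k)\cdots(\log^n k)\log(\log^n k/\log^n l)\cdot\log 2$, which by the inductive hypothesis is at least $(\log 2)^{n+2}(k-l)$.

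Inserting the resulting bound into the exponent gives $n_k\log(p(k)/p(l))\geq\tfrac{1}{2}(\log 2)^{n+1}(k-l)=c(k-l)$, which is \eqref{fractionk}. The only subtle point is that, unlike in Lemma \ref{ppc} where the first case of the induction produces the $k$ side of $\min\{k,l-k\}$, here the analogous case yields an extra factor of $k$ that must be dominated by $k-l$; this works precisely because $l\geq 0$ forces $k\geq k-l$. Otherwise the argument is an almost verbatim reflection of the proof of Lemma \ref{ppc}, and no new analytic estimates are required.
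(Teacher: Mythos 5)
Your proposal is correct and follows essentially the same path as the paper: the same reduction via $n_k\geq\tfrac12 p(k)/p'(k)$, the same elementary inequality $k(\log k)\cdots(\log^n k)\log(\log^n k/\log^n l)\geq(\log 2)^{n+1}(k-l)$, and the same two-case induction on $n$ (splitting at $\log^{n+1}k\lessgtr 2\log^{n+1}l$, with the base case at $n=0$ handled by $\log(1+x)\geq x\log 2$ on $[0,1]$ and the observation $k>k-l$). There is no meaningful divergence from the paper's argument.
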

\begin{proof}
We prove along the same path as in Lemma \ref{ppc}. For $k>2l,$ instead of \eqref{ppnk} and \eqref{log2n} we have
\begin{equation}\label{kl}
\left(\frac{p(k)}{p(l)}\right)^{n_{k}}\geq \exp \left(\frac{1}{2}k(\log k)\cdots (\log^{n} k) \log \frac{\log^{n}k}{\log^{n}l}\right)
\end{equation}
and
\begin{equation}\label{kll}
k(\log k)\cdots (\log^{n} k) \log \frac{\log^{n}k}{\log^{n}l}>(k-l) (\log 2)^{n+1}.
\end{equation}
We first consider that $n=0.$\\
\textbf{Case 1.} If $k>2l,$
then $$k\log \frac{k}{l}>k\log 2>(k-l)\log 2.$$
\textbf{Case 2.} If $l<k\leq 2l,$
then
\begin{align*}
k\log\frac{k}{l}=k\log \left(1+\frac{k-l}{l}\right)\geq k\frac{k-l}{l}\log 2>(k-l)\log 2.
\end{align*}
Therefore we have \eqref{kll} for $n=0.$ Next we suppose that \eqref{kll} holds for some $n\in\mathbb{N}.$ 
\\
\textbf{Case 1.} If $\log^{n+1}k>2\log^{n+1}l,$ then
$$k(\log k)\cdots \log \frac{\log^{n+1}k}{\log^{n+1}l}>(k-l)(\log 2)^{n+2}.$$
\textbf{Case 2.} If $\log^{n+1}l<\log^{n+1}k\leq 2\log^{n+1}l,$
then with the assumption,
\begin{align*}
k(\log k)\cdots (\log^{n+1}k)\frac{\log^{n+1}k}{\log^{n+1}l}&=k(\log k)\cdots(\log^{n+1}k)
\log\left(1+\frac{\log^{n+1}k-\log^{n+1}l}{\log^{n+1}l}\right)\\
&\geq k(\log k)\cdots (\log^{n+1}k)\frac{\log^{n+1}k-\log^{n+1}l}{\log^{n+1}l}\log 2\\
&>k(\log k)\cdots (\log^{n}k)\log\frac{\log^{n}k}{\log^{n}l}\log 2\\
&>(\log 2)^{n+1}(k-l)\log 2\\
&=(k-l)(\log 2)^{n+2}.
\end{align*}
Hence  we have \eqref{kll} by induction. Together with \eqref{kl} and $c=(\log 2)^{n+1}/2$ we have \eqref{fractionk}.
\end{proof}

\begin{theorem}\label{thmgz}Let $p(k)$ and $n_{k}$ be defined by \eqref{pt} and \eqref{nr}. Put
\begin{equation}\label{gz}
g(z)=2\sum_{k=k_{0}+1}^{\infty}\frac{p(k)^{n_{k}}z^{n_{k}}}{z^{2n_{k}}-p(k)^{2n_{k}}}.
\end{equation}
Then $g\in \mathcal{B}$ and $\infty $  is not an aymptotic value of $g.$
\end{theorem}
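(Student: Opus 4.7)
The plan is to exhibit a sequence of circles $|z| = r_k$, with $r_k \to \infty$, on which $|g|$ is bounded by a constant independent of $k$; then any curve tending to $\infty$ crosses each such circle and so $g$ stays bounded along a subsequence, ruling out $\infty$ as an asymptotic value and, with a little more work, forcing every singular value of $g^{-1}$ into a bounded set. I would take $r_k = \sqrt{p(k)\,p(k+1)}$, the geometric mean of consecutive pole-circle radii. Before the main estimate one needs the easier fact that the series converges locally uniformly off the pole circles, so that $g$ is meromorphic on $\mathbb{C}$: the $k$-th summand has modulus at most $2(|z|/p(k))^{n_k}/(1-(|z|/p(k))^{2n_k})$ for $|z|<p(k)$ and a symmetric bound for $|z|>p(k)$, and Lemmas \ref{ppc} and \ref{ppck} supply the geometric decay in $k$ needed for absolute convergence on any compactum avoiding the pole circles.

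The main step is the estimate $|g(z)|\le C$ for $|z|=r_k$, obtained by splitting the sum at $j=k,k+1$. Setting $\alpha=\sqrt{p(k)/p(k+1)}$, the terms with $j\in\{k,k+1\}$ contribute at most $2\alpha^{n_j}/(1-\alpha^{2n_j})$ each; Lemma \ref{ppc} with $l=k+1$ yields $(p(k+1)/p(k))^{n_k}\ge e^{c}$, hence $\alpha^{n_k}\le e^{-c/2}$, while Lemma \ref{ppck} applied to the pair $(k+1,k)$ gives $\alpha^{n_{k+1}}\le e^{-c/2}$, so both terms are bounded. For $j<k$ the $j$-th term is at most $2(p(j)/p(k))^{n_j}$, and Lemma \ref{ppc} (with the roles of $k,l$ played by $j,k$) bounds this by $2\exp(-c\min\{j,k-j\})$, summing to a finite constant. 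For $j>k+1$ the $j$-th term is at most $2(p(k+1)/p(j))^{n_j}\le 2\exp(-c(j-k-1))$ by Lemma \ref{ppck}. Combining the four ranges gives the desired uniform bound.

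With this bound, $\infty$ is not an asymptotic value and every finite asymptotic value lies in $\overline{D(0,C)}$. For $R>C$, every component of $\{|g|>R\}$ is contained in some annulus $\{r_{k-1}<|z|<r_k\}$, hence bounded. Writing $g=g_k+h_k$ on this annulus with $g_k$ the $k$-th summand, the max principle (applied with the above bounds on $\partial$-annulus) shows that $h_k$ is bounded by an absolute constant on $A_k$. A local analysis near each simple pole $a$ of $g$ (the residue at $a$ has modulus $p(k)/n_k$) then shows that the component of $\{|g|>R\}$ around $a$ has radius of order $p(k)/(n_k R)$, much smaller than the spacing $\asymp p(k)/n_k$ between neighboring poles on $|z|=p(k)$; hence for $R$ large enough every component of $g^{-1}(B(R))$ is a topological disk containing exactly one simple pole, and $g$ restricted to it is a conformal bijection onto $B(R)\setminus\{\infty\}$. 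Consequently $g^{-1}$ has no singular values in $B(R)$, so $\mathrm{sing}(g^{-1})\subset \overline{D(0,R)}$ and $g\in\mathcal{B}$.

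The main obstacle is the uniform estimate on $|z|=r_k$: the terms $j\in\{k,k+1\}$ would blow up if the ratio $p(k+1)/p(k)$ were too close to $1$, and it is exactly the content of Lemmas \ref{ppc} and \ref{ppck}, namely that $p(k+1)/p(k)$ raised to the power $n_k$ or $n_{k+1}$ is bounded away from $1$ by the fixed factor $e^c$, that tames these contributions. The rest of the argument — converting a uniform $|g|\le C$ bound on growing circles into non-asymptoticity of $\infty$ and membership in $\mathcal{B}$ — is then standard once the residue sizes and pole spacings on each circle $|z|=p(k)$ are compared.
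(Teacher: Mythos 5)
Your proposal is correct, and it takes a route that differs from the paper's in several genuine ways, so a comparison is worthwhile. The paper bounds $g$ on a full ``spider's web'' $W=W_1\cup W_2$: circles $|z|=p(m+\tfrac12)$ together with the radial segments at angles $\pi(2\eta-1)/(2n_m)$, so that each complementary component $W_{m,\eta}$ contains exactly one pole; it then applies the maximum principle to the single-pole-subtracted function $\zeta(z)=g(z)-\nu_{m,\eta}/(z-u_{m,\eta})$ in $W_{m,\eta}$, and finally rules out critical values in $B(R)$ by an explicit estimate of $g'$ and $\zeta'$ near the poles. You instead use only the circles $|z|=r_k=\sqrt{p(k)p(k+1)}$ (log-midpoints of the pole radii, essentially the same scale as $p(m+\tfrac12)$), subtract the \emph{whole} $k$-th summand $g_k$ from $g$ to get a function $h_k$ holomorphic on the entire annulus, and apply the maximum principle there; the geometric separation that the paper obtains from the segments $W_2$ you recover from the explicit form $g_k(z)=2/(w^{n_k}-w^{-n_k})$, $w=z/p(k)$, which is large only in $O\!\left(p(k)/(n_kR)\right)$-neighbourhoods of the $2n_k$-th roots of unity, much smaller than the pole spacing $\asymp p(k)/n_k$. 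Finally, rather than ruling out critical points by differentiating, you show each component of $g^{-1}(B(R))$ is a bounded topological disc containing a single simple pole, so $g$ restricted to it is a proper map of degree one onto $B(R)$, hence a conformal bijection with no singular values --- a softer degree argument that eliminates the derivative computations entirely. The trade-off is that your approach leans on the explicit form of $g_k$ to see that it is uniformly bounded away from its poles and that the residual term $g_k-\nu_{k,l}/(z-u_{k,l})$ stays bounded near $u_{k,l}$ (the hidden cancellation in the tail $\sum_{l'\neq l}\nu_{k,l'}/(z-u_{k,l'})$), whereas the paper's radial segments make the single-pole localisation geometrically manifest from the start. Both are sound; your version is arguably lighter on calculus but a bit more reliant on the special structure of the summands.
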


\begin{remark}
Bergweiler and Kotus \cite{BergweilerKotus2012} gave an example for the case of infinite order, 
$$
f(z)=2\sum_{k=2}^{\infty}\frac{(\log k)^{n_{k}}z^{n_{k}}}{z^{2n_{k}}-(\log k)^{2n_{k}}},
$$
where $n_{k}=\lfloor k\log k \rfloor.$ Here we take $n_{k}=\lfloor \frac{p(k)}{p'(k)}\rfloor$ instead. If we let $n=1$ and $\rho_{}=1,$ then \eqref{gz} is essentially the above function. 
\end{remark}

\begin{proof}[Proof.] If $|z|\leq p(k)/2$, then
$$\left|\frac{p(k)^{n_{k}}z^{n_{k}}}{z^{2n_{k}}-p(k)^{2n_{k}}}\right|\leq \frac{|z|^{n_{k}}p(k)^{n_{k}}}{p(k)^{2n_{k}}-|z|^{2n_{k}}}\leq 2\frac{|z|^{n_{k}}}{p(k)^{n_{k}}}\leq 2^{1-n_{k}}.$$
From \eqref{np} we see that $n_{k}\geq k$ for large $k.$
Thus the series in \eqref{gz} converges locally uniformly and hence it defines a function  $g$ meromorphic in $\mathbb{C}.$ \\
Note that
\begin{equation}\label{ukl}
u_{k,l}=p(k)\exp\left(\frac{\pi il}{n_{k}}\right)
\end{equation}
are the poles of $g,$ where $k\in\mathbb{N}$ and $0\leq l \leq 2n_{k}-1.$ 
With $u_{k,l}$ we rewrite $g(z)$ as follows
$$g(z)=2\sum_{k=k_{0}+1}^{\infty}\sum_{l=0}^{2n_{k}-1}\frac{\nu_{k,l}}{z-u_{k,l}},$$
where 
\begin{align}
\nu_{k,l}=&\lim_{z\rightarrow u_{k,l}}(z-u_{k,l})2\frac{p(k)^{n_{k}}z^{n_{k}}}{z^{2n_{k}}-p(k)^{2n_{k}}}\nonumber \\
&=2p(k)^{n_{k}} u_{k,l}^{n_{k}}\lim_{z\rightarrow u_{k,l}}\frac{z-u_{k,l}}{z^{2n_{k}}-p(k)^{2n_{k}}}\nonumber \\
&=2p(k)^{n_{k}}u_{k,l}^{n_{k}}\lim_{z\rightarrow u_{k,l}}\frac{1}{2n_{k}z^{2n_{k}-1}}\nonumber \\
&=p(k)^{n_{k}}\frac{1}{n_{k}p(k)^{n_{k}-1}\left(\exp\left(\frac{\pi il}{n_{k}}\right)\right)^{n_{k}-1}}\nonumber\\
&=\frac{p(k)}{n_{k}}\exp\left(\frac{\pi i l (1-n_{k})}{n_{k}}\right)\label{nv}.
\end{align}
For $m\in\mathbb{N}$ we set $$W_{1}=\bigcup_{m\geq k_{0}+1}^{} \left\{z\colon~|z|=p\left(m+\frac{1}{2}\right)\right\}.$$
and for $\eta\in \mathbb{N},$ 
$$W_{2}=\bigcup_{m\geq k_{0}+1}^{}\bigcup_{\eta=1}^{2n_{m}} \left\{r\exp\left(\frac{i\pi(2\eta-1)}{2n_{m}}\right)\colon~ p\left(m-\frac{1}{2}\right)\leq r\leq p\left(m+\frac{1}{2}\right)\right\}.$$ 
Let $W=W_{1}\cup W_{2}.$ We will show that $g$ is bounded on this 'spider's web' $W$.
First let $z\in W_{1}$ and $m\in\mathbb{N}$ is taken such that $|z|=p(m+\frac{1}{2}).$
Noting that if $0<x<y,$ then 
\begin{equation}\label{xyz}
\frac{xy}{y^{2}-x^{2}}=\frac{x}{y-x}\frac{y}{y+x}\leq \frac{x}{y-x}.
\end{equation}
Since $p(k)$ is increasing with $k,$ from \eqref{gz} and \eqref{xyz} we have
\begin{align*}
\frac{1}{2}|g(z)|&\leq \sum_{k=k_{0}+1}^{m}\frac{p(k)^{n_{k}}|z|^{n_{k}}}{|z|^{2n_{k}}-p(k)^{2n_{k}}}+ \sum_{k=m+1}^{\infty}\frac{p(k)^{n_{k}}|z|^{n_{k}}}{p(k)^{2n_{k}}-|z|^{2n_{k}}}
\\&
\leq \sum_{k=k_{0}+1}^{m}\frac{p(k)^{n_{k}}}{|z|^{n_{k}}-p(k)^{n_{k}}}+\sum_{k=m+1}^{\infty}\frac{|z|^{n_{k}}}{p(k)^{n_{k}}-|z|^{n_{k}}}
\\&
=\sum_{k=k_{0}+1}^{m}\frac{1}{\left(\frac{p\left(m+\frac{1}{2}\right)}{p(k)}\right)^{n_{k}}-1}+\sum_{k=m+1}^{\infty}\frac{1}{\left(\frac{p(k)}{p\left(m+\frac{1}{2}\right)}\right)^{n_{k}}-1}
\\&
=\Sigma_{1,m}+\Sigma_{2,m}.
\end{align*}
From Lemma \ref{ppc} with $l=m+\frac{1}{2}$ we have, for $k_{0}\leq k \leq m,$
\begin{align*}
\left(\frac{p\left(m+\frac{1}{2}\right)}{p(k)}\right)^{n_{k}}\geq \exp\left(c\min\{k,m+\frac{1}{2}-k\}\right),
\end{align*}
where $c=(\log 2)^{n+1}/2.$
Thus
\begin{align*}
\Sigma_{1,m}&\leq \sum_{k=k_{0}+1}^{m}\frac{1}{\exp\left(c\min\{k,m+\frac{1}{2}-k\}\right)-1}\\
&=\sum_{k=k_{0}+1}^{\lfloor \frac{m}{2}\rfloor}\frac{1}{\exp(ck)-1}+\sum_{k=\lfloor \frac{m}{2}\rfloor+1}^{m}\frac{1}{\exp \left(c\left(m+\frac{1}{2}-k\right)\right)-1}\\
&=\sum_{k=k_{0}+1}^{\lfloor \frac{m}{2}\rfloor}\frac{1}{\exp(ck)-1}+\sum_{j=0}^{m-\lfloor \frac{m}{2}\rfloor-1}\frac{1}{\exp \left(c\left(j+\frac{1}{2}\right)\right)-1}\\
&\leq \sum_{k=1}^{\infty}\frac{1}{\exp(ck)-1}+\sum_{j=0}^{\infty}\frac{1}{\exp \left(c\left(j+\frac{1}{2}\right)\right)-1}~ = \colon C.
\end{align*}
Similarly with Lemma \ref{ppck} and $l=m+\frac{1}{2},$ we obtain
\begin{align*}
\Sigma_{2,m}&\leq \sum_{k=m+1}^{\infty}\frac{1}{\exp \left(c \left(k-\left(m+\frac{1}{2}\right)\right)\right)-1}=\sum_{j=0}^{\infty}\frac{1}{\exp \left(c\left(j+\frac{1}{2}\right)\right)-1}\leq C.
\end{align*}
 Therefore 
\begin{equation}\label{w1}
|g(z)|\leq 2(\Sigma_{1,m}+\Sigma_{2,m})\leq 4C,
\end{equation}
for $|z|=p\left(m+\frac{1}{2}\right)$, $m\in\mathbb{N}.$ 
Next we consider $z\in W_{2}.$ Then $z=r\exp(i\pi(2\eta-1)/(2n_{m})),$ where $p(m-\frac{1}{2})\leq r \leq p(m+\frac{1}{2})$ and $\eta\in\mathbb{N},$ $1\leq \eta \leq 2n_{m}.$ Thus
$$z^{2n_{m}}=\left(r\exp\left(i\pi\frac{2\eta-1}{2n_{m}}\right)\right)^{2n_{m}}=-r^{2n_{m}}$$
With this, \eqref{xyz} and since $p(k)$ is increasing with $k,$ we have 
\begin{align*}
\frac{1}{2}|g(z)|&\leq \sum_{k=k_{0}+1}^{m-1}\frac{p(k)^{n_{k}}r^{n_{k}}}{r^{2n_{k}}-p(k)^{2n_{k}}}+\frac{p(m)^{n_{m}}r^{n_{m}}}{r^{2n_{m}}+p(m)^{2n_{m}}} +\sum_{k=m+1}^{\infty}\frac{p(k)^{n_{k}}r^{n_{k}}}{p(k)^{2n_{k}}-r^{2n_{k}}}
\\&
\leq\sum_{k=k_{0}+1}^{m-1}\frac{p(k)^{n_{k}}}{p(m-\frac{1}{2})^{n_{k}}-p(k)^{n_{k}}} +2 + \sum_{k=m+1}^{\infty}\frac{p(m+\frac{1}{2})^{n_{k}}}{p(k)^{n_{k}}-p(m+\frac{1}{2})^{n_{k}}}
\\&
\leq \Sigma_{1,m-1}+2 + \Sigma_{2,m}\\&
\leq 2C+2.
\end{align*}
Combining with \eqref{w1} it follows that
\begin{equation}\label{bounded}
|g(z)|\leq4C+4~\text{for}~ z\in W.
\end{equation}
Actually $g$ is bounded on a larger set, which we want to show next.\\
From Lemma \ref{pm} we have
\begin{equation}\label{pri}
p\left(m+\frac{1}{2}\right)-p(m)=\int_{m}^{m+\frac{1}{2}}p'(t)dt \sim \frac{1}{2}p'(m),\qquad m\rightarrow \infty.
\end{equation}
And note that by \eqref{nr}
\begin{equation}\label{pi}
|u_{m,\eta}-u_{m,\eta+1}|=p(m)\left|\exp\left(\frac{i\pi}{n_{m}}\right)-1\right|\sim p(m)\frac{\pi}{n_{m}}\sim \pi p'(m),\qquad m\rightarrow\infty.
\end{equation}
If $W_{m,\eta}$ denotes the component of $\mathbb{C}\diagdown W$ that contains $u_{m,\eta}$, we find that there exists $\lambda>0$ such that
\begin{equation}\label{distb}
\dist\left(u_{m,\eta},\partial W_{m,\eta} \right)\geq 2\lambda p'(m),
\end{equation}
for $m$ large and $\eta\in \left\{0,1,...,2n_{m}-1\right\}.$ \\
Consider the function
\begin{equation}\label{zeta}
\zeta(z)= g(z)-\frac{\nu_{m,\eta}}{z-u_{m,\eta}},
\end{equation}
which is holomorphic in the closure of $W_{m,\eta}.$ For $z\in \partial W_{m,\eta}$ with \eqref{nv}, \eqref{bounded}, \eqref{nr} and $n_{m}\geq p(m)/(2p'(m))$ we have
\begin{align*}
|\zeta(z)|&\leq |g(z)|+\frac{|\nu_{m,\eta}|}{|z-u_{m,\eta}|}\\
&\leq 4C+4+\frac{p(m)}{2\lambda n_{m}p'(m)}\\
&\leq 4C+4+\frac{1}{\lambda},
\end{align*}
for $m$ large. By the maximum principle,
$$|\zeta(z)|\leq 4C+ 4 + \frac{1}{\lambda}~\text for~ z\in W_{m,\eta}.$$
We put $r_{m}=\lambda p'(m)$ and deduce that if $z\in W_{m,\eta}\diagdown D(u_{m,\eta}, r_{m}),$ then
\begin{align*}
|g(z)|&\leq |\zeta(z)|+\frac{|\nu_{m,n}|}{r_{m}}\\
&\leq 4C+4+\frac{1}{\lambda}+ \frac{p(m)}{\lambda n_{m}p'(m)}\\
&\leq 4C+ 4 + \frac{3}{\lambda}.
\end{align*}
This means that $g$ is large only in small neighborhoods of the poles.

On the other hand we will show that the set of critical values of $f$ is bounded by verifying that there are  no critical points of $g$ in these small neighborhoods of the poles. 

Assume that $z\in \partial W_{m,\eta}$ and $m', \eta'$ are such that $z\in \partial W_{m',\eta'}.$ Then $|m-m'|\leq 1$ and so $r_{m}\leq 2r_{m'}$ by \eqref{distb}. Therefore $ \overline{D\left(z,\frac{1}{2}r_{m}\right)}\cap D\left(u_{m',\eta'}, r_{m'}\right)=\emptyset.$ Thus
\begin{align*}
|g'(z)|&=\frac{1}{2\pi}\left|\int_{|\xi-z|=\frac{1}{2}r_{m}}\frac{g(\xi)}{(\xi-z)^{2}}d\xi\right|\leq \frac{r_{m}}{2}\max_{|\xi-z|=\frac{1}{2}r_{m}}\frac{|g(\xi)|}{|\xi-z|^{2}}\leq \frac{2}{r_{m}}\left(4C+4+\frac{3}{\lambda}\right).
\end{align*}

Since $n_{m}>p(m)/(2p'(m))$ for $m$ large, from \eqref{zeta}, \eqref{nv} and \eqref{distb} we have  
\begin{align*}
|\zeta'(z)|&\leq |g'(z)|+ \frac{|\nu_{m,\eta}|}{|z-u_{m,\eta}|^{2}}\\
&\leq \frac{2}{r_{m}}\left(4C+4+\frac{3}{\lambda}\right)+\frac{p(m)}{n_{m}(2\lambda p'(m))^{2}}\\
&= \frac{2}{r_{m}}\left(4C+4+\frac{3}{\lambda}\right)+\frac{p(m)}{4\lambda n_{m} r_{m}p'(m)}\\
&\leq \frac{2}{r_{m}}\left(4C+4+\frac{3}{\lambda}\right)+\frac{1}{2\lambda r_{m}}\\
&\leq \frac{2}{r_{m}}\left(4C+4 + \frac{13}{4\lambda}\right)
\end{align*}
for $z\in \partial W_{m,\eta}.$ It implies  by maximum principle that
$$|\zeta'(z)|\leq \frac{2}{r_{m}}\left(4C+4 + \frac{13}{4\lambda}\right)~~~\text{for} ~ z\in W_{m,\eta}.$$
Choose $\delta>0$ sufficiently small and $z\in D\left(u_{m,\eta}, \delta r_{m}\right).$ Since $n_{m}<2p(m)/p'(m)$ for $m$ large we have
\begin{align*}
|g'(z)|&\geq \frac{|\nu_{m,\eta}|}{|z-u_{m,\eta}|^{2}}-|\zeta'(z)|\\
&\geq \frac{p(m)}{\delta^{2}n_{m}r_{m}\lambda p'(m)}-\frac{2}{r_{m}}\left(4C+4+ \frac{13}{4\lambda}\right)\\
&\geq \frac{2}{r_{m}}\left(\frac{1}{4\delta^{2}\lambda}-4C-4-\frac{13}{4\lambda}\right) > 0.
\end{align*}
Hence if $g'(z)=0$ for some $z\in W_{m,\eta}$ then $|z-u_{m,\eta}|\geq \delta r_{m}.$ Therefore
\begin{equation}\label{cbd}
|g(z)|\leq |\zeta(z)|+ \frac{|\nu_{m,\eta}|}{|z-u_{m,\eta}|}\leq 4C+4+\frac{1}{\lambda}+\frac{p(m)}{\delta \lambda n_{m}p'(m)} \leq 4C+4+\frac{1}{\lambda}+\frac{2}{\delta\lambda}
\end{equation}
 as claimed. The same is true for the set of asymptotic values of $g$ with \eqref{bounded}. Hence $g\in \mathcal{B}.$ 
\end{proof}

\begin{theorem} Let $g$ be defined as in \eqref{gz}. Then $\rho_{n}(g)=\rho.$
\end{theorem}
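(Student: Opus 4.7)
The plan is to establish $\rho_n(g) = \rho$ by proving both inequalities.

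\emph{Lower bound.} I would apply Lemma \ref{nrorder} and count poles directly. From \eqref{ukl}, the poles of $g$ are $u_{k,l} = p(k)\exp(\pi i l/n_k)$ with $0\leq l\leq 2n_k-1$, so there are exactly $2n_k$ poles on each circle $|z|=p(k)$. With $q = p^{-1}$, so that $q(r) = \exp^n(r^\rho)$, this yields
\begin{equation*}
n(r,g) = 2\sum_{k_0+1\leq k\leq q(r)} n_k.
\end{equation*}
Lemma \ref{sumnk} together with the identity $p(t)/p'(t) = \rho t(\log t)\cdots(\log^n t)$ (cf.\ \eqref{np}) gives
\begin{equation*}
\sum_{k_0+1\leq k\leq q(r)} n_k \sim \int_{k_0+1}^{q(r)} \rho t(\log t)\cdots(\log^n t)\,dt \sim \tfrac{\rho}{2}\,q(r)^2(\log q(r))\cdots(\log^n q(r)),
\end{equation*}
since the integrand is the leading term in the derivative of $\tfrac{1}{2}t^2(\log t)\cdots(\log^n t)$. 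Using $\log^j q(r) = \exp^{n-j}(r^\rho)$, the dominant factor of $\log n(r,g)$ is $2\exp^{n-1}(r^\rho)$, and iterating the logarithm via the pattern $\log^j n(r,g)\sim\exp^{n-j}(r^\rho)$ for $1\leq j\leq n$ gives $\log^{n+1}_+ n(r,g)\sim\rho\log r$. Lemma \ref{nrorder} then yields $\rho_n(g)\geq \rho$.

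\emph{Upper bound.} I would decompose $T(r,g) = m(r,g) + N(r,g)$. The bound \eqref{w1} established inside the proof of Theorem \ref{thmgz} shows $|g(z)|\leq 4C+4$ on every circle $|z| = R_m := p(m+1/2)$ of the spider's web, so $m(R_m,g) = O(1)$. The standard estimate $N(r,g) = \int_0^r n(t,g)\,t^{-1}\,dt \leq n(r,g)\log r$ then combines with the lower-bound computation to give $\log^{n+1} N(R_m,g)\sim \log^{n+1} n(R_m,g)\sim \rho\log R_m$, and hence $T(R_m,g)\leq n(R_m,g)\log R_m + O(1)$. For an arbitrary large $r$, choose $m$ with $R_{m-1}\leq r\leq R_m$; monotonicity of $T(\cdot,g)$ gives $T(r,g)\leq T(R_m,g)$. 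Lemma \ref{pm} yields $R_m - R_{m-1} = o(R_m)$, so $R_m/R_{m-1}\to 1$ and therefore $\log R_m\sim \log r$. This produces $\log^{n+1}_+ T(r,g)/\log r \leq \rho + o(1)$, so $\rho_n(g)\leq\rho$.

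\emph{Main obstacle.} The delicate step is the iterated-logarithm bookkeeping in the lower bound: one must verify that the dominant factor $q(r)^2 = (\exp^n(r^\rho))^2$ collapses under $n+1$ logarithms to exactly $\rho\log r + O(1)$, while all other factors in $n(r,g)$ (the constant $\rho$, the subdominant $\exp^j(r^\rho)$ for $j<n-1$, and the tail $r^\rho\cdot\rho\log r$) are absorbed as $o(\log r)$. It is probably cleanest to verify the base cases $n=1$ (where $\log^2 n(r,g)\sim\log(2r^\rho)\sim\rho\log r$) and $n=2$ by hand, and then confirm the inductive pattern $\log^j n(r,g)\sim \exp^{n-j}(r^\rho)$.
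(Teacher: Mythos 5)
Your proposal is correct, and it reaches the conclusion by a route that is organized differently from the paper's. The paper works with a single two-sided asymptotic: after $n(r,g)=\sum_{k\leq q(r)}2n_k\sim 2\int_{k_0+1}^{q(r)}p(t)/p'(t)\,dt$, it substitutes $t=q(s)$ to rewrite the integral as $\int q'(s)^2 s\,ds$, applies l'H\^opital (via Lemma \ref{forgrowth}) to get $n(r,g)\sim q(r)q'(r)r$, integrates once more to obtain $N(r,g)\sim\tfrac12 q(r)^2$, and combines this with $m(R_m,g)=O(1)$ on the spider's web to conclude $T(r,g)\sim\tfrac12 q(r)^2$ on those radii; a single chain of iterated logarithms then gives $\log^{n+1}T(r,g)\sim\rho\log r$, and monotonicity of $T$ extends this to all $r$. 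You instead split the proof into two halves: a lower bound via Lemma \ref{nrorder} (directly from the pole count, estimating $\int\rho t(\log t)\cdots(\log^n t)\,dt$ by an antiderivative rather than a change of variables), and an upper bound via the coarse inequality $N(r,g)\leq n(r,g)\log r$, which is cheaper than the paper's $N\sim\tfrac12 q^2$ but suffices because the extra $\log r$ factor disappears under $n+1$ iterated logarithms. Your treatment of the passage from spider's-web radii to all $r$ is also spelled out more explicitly than the paper's terse ``since $T$ is increasing''; it is the same argument. Note that $\log^j n(r,g)\sim\exp^{n-j}(r^\rho)$ is not quite right at $j=1$ (there is a factor of $2$ there, as you yourself state a line earlier, and it is harmless once one more $\log$ is applied), but you acknowledge this in the final paragraph, and in fact it is exactly the factor of $2$ arising from $q(r)^2$ that the paper tracks explicitly. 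Both approaches are essentially equivalent in difficulty; the paper's yields the cleaner exact asymptotic $T(r,g)\sim\tfrac12 q(r)^2$, while yours avoids the two l'H\^opital calculations at the cost of using the cruder $N\leq n\log r$ estimate.
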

\begin{proof}
From Lemma \ref{sumnk} the number $n(r,g)$ of poles of $g$ in $\overline{D(0,r)}$ satisfies
$$
n(r,g)=\sum_{k=k_{0}+1}^{\left\lfloor q(r)\right\rfloor}2n_{k}\sim 2\int_{k_{0}+1}^{ q(r)}\frac{p(t)}{p'(t)}dt.
$$
Now let $t=q(s)$ and $r_{0}=p(k_{0}+1).$ Then
\begin{align*}
\int_{k_{0}+1}^{q(r)}\frac{p(t)}{p'(t)}dt &=\int_{r_{0}}^{r}\frac{p(q(s))}{p'(q(s))}q'(s)ds
\\&
=\int_{r_{0}}^{r}\frac{s}{p'(q(s))}q'(s)ds
\\&
=\int_{r_{0}}^{r}q'(s)^{2}sds.
\end{align*}
Hence
\begin{equation}\label{nrestimate}
n(r,g)\sim 2\int_{r_{0}}^{r}q'(s)^{2}sds
\end{equation}
for $r\rightarrow \infty.$ By Lemma \ref{forgrowth} we have
\begin{equation}\label{secint}
\frac{q(r)q'(r)}{q'(r)^{2}r}=\frac{q(r)}{rq'(r)}\rightarrow 0,\qquad r\rightarrow \infty.
\end{equation}
On the other hand,
\begin{align*}
\frac{q''(r)q(r)}{q'(r)^{2}}&= 1-\frac{q'(r)q'(r)-q(r)q''(r)}{q'(r)^{2}}
\\&
=1-\frac{d}{dr}\left(\frac{q(r)}{q'(r)}\right).
\end{align*}
Again with lemma \ref{forgrowth} we have
\begin{equation}\label{firint}
\frac{q(r)q''(r)}{q'(r)^{2}}\rightarrow 1,~~~ r\rightarrow \infty,
\end{equation}
We claim that
\begin{equation}\label{middle}
2\int_{r_{0}}^{r}q'(s)^{2}sds\sim q(r)q'(r)r~~~~~~\text{as}~r\rightarrow \infty.
\end{equation}
In fact with \eqref{secint} and \eqref{firint} by l'Hospital's rule we have
\begin{align*}
\lim_{r\rightarrow \infty}\frac{q(r)q'(r)r}{\int_{r_{0}}^{r}q'(s)^{2}s ds}&=\lim_{r\rightarrow \infty}\frac{\frac{d}{dr}(q(r)q'(r)r}{\frac{d}{dr}\int_{r_{0}}^{r}q'(s)^{2}s ds}\\
&=\lim_{r\rightarrow \infty}\frac{q'(r)^{2}r+q(r)q''(r)r+q(r)q'(r)}{q'(r)^{2}r}\\
&=\lim_{r\rightarrow \infty}\left(1+\frac{q(r)q''(r)}{q'(r)^{2}}+\frac{q(r)}{q'(r)r}\right)\\
&=2.
\end{align*}
Therefore from \eqref{nrestimate}, \eqref{middle} and the definition of counting function,
$$N(r,g) \sim \frac{1}{2}q^{2}(r)$$
as $r \rightarrow \infty.$

Suppose that $r$ has the form $r=p(k+\frac{1}{2})$ for  $k_{0}\leq k\in \mathbb{N}$ large. From \eqref{bounded} we have $m(r,g)\leq 4C+4.$  Since
$$
T(r,g)= N(r,g) + m(r,g)
$$
we obtain 
\begin{align*}
T(r,g)\sim \frac{1}{2}q(r)^{2},\qquad r \rightarrow \infty.
\end{align*}
It yields that
\begin{align*}
\log T(r,g)&=(1+o(1))2\log q(r)\\
&=(1+o(1))2 \exp^{n-1} (r^{\rho}),
\end{align*}
and thus
\begin{equation}\label{lin}
\log^{n+1}T(r,g)\sim \rho \log r
\end{equation}
as $r\rightarrow\infty$ through $r$-values of the form $r=p\left(k+\frac{1}{2}\right).$
It follows that \eqref{lin} holds for all $r$ since $T(r,g)$ is increasing with $r.$ Hence
\begin{align*}
\rho_{n}(g)=\limsup_{r\rightarrow \infty}\frac{\log^{n+1}T(r,g)}{\log r}=\rho.
\end{align*}
\end{proof}

\section{Proof of Theorem \ref{thminfty}}\label{proof2}

Let $g$ be the function constructed in section \ref{example} and put $f(z)=g(z)^{M}$. Hence the multiplicity of all poles of $f$ is $M,$ $f\in \mathcal{B}$ without $\infty$ as its aymptotic value and $\rho_{n}(f)=\rho.$ 

As in section  \ref{proof1} we denote the sequence of poles by $a_{j},$ ordered such that $|a_{j}|\leq |a_{j+1}|$ for all $j\in \mathbb{N}.$
Choose $b_{j}$ as in section \ref{proof1} so that
$$f(z)\sim \left(\frac{b_{j}}{z-a_{j}}\right)^{M}\qquad \text{as}~~~z \rightarrow a_{j},$$
for each $j\in \mathbb{N}.$ We thus have $a_{j}=u_{m,\eta}$ and $b_{j}=\nu_{m,\eta}$ for some $m, \eta \in \mathbb{N}$ and $0\leq \eta \leq 2n_{m}-1.$ 
 
Choose $R_{0}\geq 4C+4+\frac{1}{\lambda}+\frac{2}{\delta\lambda},$ where $\lambda,~ \delta$ are as in \eqref{cbd} and $R_{l}=R_{0}\exp(2^{l})$ for $l\in\mathbb{N}.$  We denote by $E_{l}$ the collections of  all components $V$ of $f^{-l}(B(R_{l}))$ which satisfy $f^{k}(V)\subset U_{j_{k+1}}\subset B(R_{k})$ for $0\leq k \leq l-1$ and $\overline{E}_{l}=\bigcup_{A\in E_{l}}A.$ It follows  that $E=\bigcap_{l=1}^{\infty}\overline{E_{l}}\subset I(f).$

The estimates obtained in section \ref{proof1} also hold with $R$ replaced by $R_{l}.$ So we may use them for the map $g_{j}$ that maps $D\subset B(R_{l})\backslash \{\infty\}$ to $U_{j},$ the component of $f^{-1}(B(R_{l}))$ containing $a_{j}.$
From \eqref{diamchi} we deduce that if $V\in E_{l}$ such that $f^{k}(V)\subset U_{j_{k+1}}\subset B(R_{k})$ for $0\leq k\leq l-1,$ then \eqref{v} holds. 

Here $a_{j_{k}}$ is a pole of $f$ that is contained in $U_{j_{k}}$ for $k=1,2,\cdots,l.$ From \eqref{ukl} and \eqref{nv} we know that $|a_{j_{k}}|=:r_{j_{k}}=p(l)$ for some $l\geq k_{0}+1$ and accordingly
$|b_{j_{k}}|=r_{j_{k}}/n_{j_{k}}.$ With the definition of $p,$ $q$ and $n_{j_{k}}$ we have
$$|b_{j_{k}}|\sim \frac{p(l)}{p(l)/p'(l)}=p'(l)=\frac{1}{q'(p(l))}.$$
Therefore
\begin{equation}\label{abr}
\frac{|b_{j_{k}}|}{|a_{j_{k}}|^{1+\frac{1}{M}}}\sim \frac{1}{q'(r_{j_{k}})r_{j_{k}}^{1+\frac{1}{M}}}.
\end{equation}
Recall that $q(r)=\exp^{n}(r^{\rho_{}})$ is convex and thus $q'(r)$ is increasing. Moreover $r_{j_{k}}\geq R_{k-1}$ for $k=1,2,\cdots,l.$ It follows from \eqref{v} and \eqref{abr} that
\begin{equation}\label{dl}
\diam_{\chi}(V)\leq \prod_{k=1}^{l}\frac{A}{q'(R_{k-1})R_{k-1}^{1+\frac{1}{M}}}=:d_{l},
\end{equation}
where $A\neq 0$ is a constant.

With $d_{l}$ we intend to apply Lemma \ref{mass}. In order to do so we are estimating $\Delta_{l}.$ From \eqref{subdisk} and \eqref{bounded} we deduce that
\begin{equation}\label{in}
D\left(a_{j},\frac{|b_{j}|}{4R_{l}^{\frac{1}{M}}}\right)\subset U_{j}= W_{m,\eta}\cap f^{-1}(B(R_{l})). 
\end{equation}
Meanwhile \eqref{pri} and \eqref{pi} imply that
\begin{equation}\label{ex}
W_{m,\eta}\subset D(u_{m,\eta},\tau|\nu_{m,\eta}|)=D\left(a_{j}, \tau|b_{j}|\right),
\end{equation}
where $\tau=1/2+\pi/2,$ $a_{j}=u_{m,\eta},$ and $m$ large. 
For $\varepsilon>0$ small set
$$A(S)=\left\{z\in \mathbb{C} \colon S<|z|<2S\right\},$$
$$A_{\varepsilon}(S)=\left\{z\in \mathbb{C} \colon (1+\varepsilon)S<|z|<(1-\varepsilon)
2S\right\}.$$
Then from \eqref{in} we have
\begin{align*}
\area \left(\bigcup_{j\in\mathbb{N}}U_{j}\cap A(S)\right)&\geq \area\left(\bigcup_{a_{j}\in A_{\varepsilon}(S)} U_{j} \right)\\
&\geq \sum_{a_{j}\in A_{\varepsilon}(S)}\pi\left(\frac{1}{4R_{l}^{\frac{1}{M}}}|b_{j}|\right)^{2}\\
&=\pi \frac{1}{16R_{l}^{\frac{2}{M}}}\sum_{a_{j}\in A_{\varepsilon}(S)}|b_{j}|^{2}.
\end{align*}
On the other hand with \eqref{ex} there exists a $\delta>0$ such that
\begin{align*}
\area A(S)&\leq (1+\delta) \area A_{\varepsilon}(S)\\
&\leq (1+\delta)\area\left(\bigcup_{a_{j}\in A_{\varepsilon}(S)} W_{m, \eta}\right)\\
&\leq \pi(1+\delta)\tau^{2}\sum_{a_{j}\in A_{\varepsilon}(S)}|b_{j}|^{2}.
\end{align*}
We conclude that
\begin{equation}\label{tau}
\dens \left(\bigcup_{j\in\mathbb{N}}U_{j}, A(S)\right)\geq \frac{1}{16(1+\delta)\tau^{2}R_{l}^{\frac{2}{M}}}.
\end{equation}
Now consider $g_{j}$ as defined in \eqref{g}, which is a branch  of $f^{-1}$ mapping $$A'(S)=A(S)\setminus(-2S,-S)$$ into $U_{j}.$  
With $\lambda=1/2$ in \eqref{cube} and \eqref{glambda} we obtain
$$\frac{4|b_{j}|}{27m_{j}(2S)^{1+\frac{1}{m_{j}}}}\leq |g'_{j}(z)|\leq \frac{12|b_{j}|}{m_{j}S^{1+\frac{1}{m_{j}}}},$$
for $z\in A'(S).$ Then
$$\sup_{u,v\in A'(S)}\left|\frac{g'_{j}(u)}{g'_{j}(v)}\right|\leq 324,$$
for $S$ large enough. Hence with \eqref{tau} it yields 
\begin{equation}\label{jj}
\dens \left(g_{j}\left(\overline{E_{1}}\right),g_{j}\left(A'(S)\right)\right)\geq \frac{1}{324^{2}}\dens \left(\overline{E_{1}}, A'(S)\right)\geq \frac{\alpha}{ R_{l}^{2/M}},
\end{equation}
where $\alpha=1/(16(1+\delta)324^{2}\tau^{2}).$

Now we let $S=2^{k}R_{0}$ with $k\geq 0.$ Applying the above for all such $S$ and for all branches $g_{j}$ mapping $A'(S)$ to $U_{j}$ from \eqref{jj} we deduce that
\begin{equation}\label{dens2}
\dens(\overline{E_{2}},U_{j})\geq \frac{\alpha }{R_{l}^{2/M}},
\end{equation}
for each $U_{j}$ in $E_{1}.$ 

Suppose that $V\subset U_{j_{1}}$ is a component of $E_{l}.$ Let $j_{2},\cdots,j_{l}$ be such that $f^{m}(V)\subset U_{j_{m+1}}$ for $m\in\mathbb{N},$ $0\leq m \leq l-1.$ Then $f^{l-1}(V)=U_{j_{l}}$ and
\begin{equation}\label{e2}
f^{l-1}\left(\overline{E}_{l+1}\cap V\right)= \overline{E_{2}}\cap U_{j_{l}}.
\end{equation}
Denote by $g_{j_{l}}$ a branch of $f^{-1}$ that maps $U_{j_{l}}$ into $U_{j_{l-1}}.$ For $l$ large $g_{j_{l}}$ extends univalently to a map from $D\left(a_{j_{l}}, \frac{5}{6}|a_{j_{l}}|\right)$ into $B(R_{l}).$ It implies that the branch of the inverse of $f^{l-1}$ which maps $U_{j_{l}}$ to $V$ extends univalently to $D\left(a_{j_{l}}, \frac{5}{6}|a_{j_{l}}|\right).$  

Noting that $U_{j_{l}}\subset D\left(a_{j_{l}}, \frac{1}{2}|a_{j_{l}}|\right)$ by \eqref{supradiusa}, we can now apply Koebe's distortion theorem with $\lambda=\frac{3}{5}.$ From \eqref{dens2}, \eqref{e2} and  \eqref{cube} we obtain
$$\dens \left(\overline{E}_{l+1}, V\right)\geq \frac{1}{256}\dens(\overline{E}_{2}, U_{j_{l}})\geq \frac{1}{256}\frac{\alpha}{ R_{l}^{2/M}}.$$
Together with \eqref{denschor} and $B=\alpha/(9^{2}\cdot256)$ we conclude that 
\begin{equation}\label{deltal}
\dens_{\chi}\left(\overline{E}_{l+1},V\right)\geq \frac{B}{R_{l}^{2/M}}=:\Delta_{l}.
\end{equation}

Next set $h(t)$ be as in \eqref{h} and $g(t)=\left(\log^{n}\frac{1}{t}\right)^{\gamma}$.  It is easy to see that $g(t)$ is a decreasing continuous function and $\lim_{t\rightarrow0}t^{2}g(t)=\lim_{t\rightarrow 0} h(t)=0.$ Now we shall apply Lemma \ref{mass} with $h(t)$ and $g(t).$

From \eqref{dl}  we have
\begin{align*}
\log^{n}\frac{1}{d_{l}}
&= \log^{n}\left(\prod_{k=1}^{l}\frac{1}{A}q'(R_{k-1})R_{k-1}^{1+\frac{1}{M}}\right)\\
&=\log^{n-1}\log \left(\prod_{k=1}^{l}\frac{1}{A}q'(R_{k-1})R_{k-1}^{1+\frac{1}{M}}\right)\\
&=\log^{n-1}\sum_{k=1}^{l}\left(\log\frac{1}{A}+\log q'(R_{k-1})+\left(1+\frac{1}{M}\right)\log R_{k-1}\right).
\end{align*}
\\
Noting that by \eqref{qpri},
\begin{align*}
\log q'(r)
&=\log \left((\exp^{n}r^{\rho_{}})(\exp^{n-1}r^{\rho_{}})\cdots(\exp r^{\rho_{}})\rho_{} r^{\rho_{}-1}\right)\sim \exp^{n-1}\left(r^{\rho_{}}\right)
\end{align*}
as $r\rightarrow \infty,$ we have
$$
\log^{n}\frac{1}{d_{l}}\sim \log^{n-1}\sum_{k=1}^{l}\log q'(R_{k-1})\sim \log^{n-1}\sum_{k=1}^{l}\exp^{n-1}(R_{k-1}^{\rho})\sim R_{l-1}^{\rho_{}}
$$
as $l \rightarrow \infty.$
Since $R_{l}=R_{0}\exp(2^{l})$ we obtain
$$g(d_{l})=\left(\log^{n}\frac{1}{d_{l}}\right)^{\gamma}\sim R_{0}^{\gamma \rho}\exp\left(\rho_{} \gamma 2^{l-1}\right),\qquad l\rightarrow \infty.$$
Thus there exists a constant $K>0$ such that
\begin{equation}\label{gdl}
g(d_{l})\geq KR_{0}^{\gamma \rho}\exp\left(\rho_{} \gamma 2^{l-1}\right)
\end{equation}
for $l$ large.

On the other hand since $R_{l}$ is nondecreasing it follows that
 
\begin{align*}
\prod_{k=1}^{l}\Delta_{k}=
B^{l}\prod_{k=1}^{l}R_{k}^{-2/M}
&=B^{l}R_{0}^{-2l/M}\prod_{k=1}^{l}\exp \left(-\frac{2}{M} 2^{l}\right)\\
&=B^{l}R_{0}^{-2l/M}\exp \left(-\frac{2}{M}\sum_{k=1}^{l}2^{k}\right)\\
&=B^{l}R_{0}^{-2l/M}\exp \left(-\frac{8}{M}2^{l-1}+\frac{4}{M}\right).
\end{align*}
Together with \eqref{gdl} we have
\begin{equation}\label{prodgp}
g(d_{l})\prod_{k=1}^{l}\Delta_{k}\geq KB^{l}R_{0}^{\gamma\rho-2l/M}\exp\left(\frac{4}{M}\right)\exp \left(2^{l-1}\left(\rho_{}\gamma-\frac{8}{M}\right)\right)\rightarrow \infty
\end{equation}
as $l\rightarrow \infty$ if $\gamma > 8/(M\rho).$ 

With Lemma \ref{mass} and \eqref{prodgp} we complete the proof.\\

\noindent{\bf Acknowledgements.}\,\,My gratitudes are due to Prof. Dr. W. Bergweiler -my PhD supervisor, who introduced me Complex Dynamics, taught me Hausdorff measure and suggested this problem, while offering constant discussions of great help. And also to China Scholarship Council for its financial support.

The Author would like to thank the referee for his/her constructive comments.

\end{document}